\newtheorem{theorem}{Theorem}
\newtheorem{lemma}{Lemma}
\newtheorem{corollary}{Corollary}
\newtheorem{remark}{Remark}
\newtheorem{assumption}{Assumption}
\newtheorem{proposition}{Proposition}
\newcommand{\hL}{\mathcal{L}}
\newcommand{\hN}{\mathcal{N}}
\newcommand{\cD}{{\cal D}}
\newcommand{\cL}{{\cal{L}}}
\newcommand{\cN}{{\cal N}}
\newcommand{\cT}{{\cal T}}
\newcommand{\cE}{{\cal E}}
\newcommand{\cZ}{{\cal Z}}
\begin{document}
\title{Online Stochastic Optimization of Networked Distributed Energy Resources}
\author{Xinyang Zhou, Emiliano Dall'Anese and Lijun Chen
\thanks{X. Zhou is with the National Renewable Energy Laboratory, Golden, CO 80401, USA (Email: xinyang.zhou@nrel.gov).} 
\thanks{E. Dall'Anese and L. Chen are with the College of Engineering and Applied Science, University of Colorado, Boulder, CO 80309, USA  (Emails: \{emiliano.dallanese, lijun.chen\}@colorado.edu).}
\thanks{Preliminary results of this paper have been presented at IEEE International Conference on Smart Grid Communications (SmartGridComm), Dresden, German, October 2017 \cite{zhou2017stochastic}. }}

\maketitle

\begin{abstract}
This paper investigates distributed control and incentive mechanisms to coordinate distributed energy resources (DERs) with both continuous and discrete decision variables as well as device dynamics in distribution grids. We formulate a multi-period social welfare maximization problem, and based on its convex relaxation propose a distributed stochastic dual gradient algorithm for managing DERs. We further extend it to an online realtime setting with time-varying operating conditions, asynchronous updates by devices, and feedback being leveraged to account for nonlinear power flows as well as reduce communication overhead. The resulting algorithm provides a general online stochastic optimization algorithm for coordinating networked DERs with discrete power setpoints and dynamics to meet operational and economic objectives and constraints.  We characterize the convergence of the algorithm analytically and evaluate its performance numerically. 

%
\end{abstract}

\begin{keywords}
Discrete decision variables, real-time pricing, asynchronous updates, stochastic dual algorithm, distribution grids, time-varying optimization.
\end{keywords}

\section{Introduction}

Power grids are experiencing an increasing flexibility in control on both supply and demand sides thanks to growing penetration of distributed energy resources (DERs) on the distribution level, including roof-top photovoltaic (PV) panels, electric vehicles (EV), smart batteries, thermostatically controlled loads (TCLs, e.g., water heaters and air-conditioners (A/Cs)), and other responsive loads, etc. While these flexible or controllable devices  can potentially provide ancillary services for the grid \cite{dall2017unlocking}, coordinating a large number of such devices with various dynamics and constraints to achieve network-wide objectives such as voltage regulation, frequency control, and economic efficiency is extremely challenging. Moreover, unlike the traditional assets owned by utility companies, the mass customer-owned devices are not necessarily subject to the control of network operators unless properly incentivized.  This calls for the joint design of distributed control and incentive/market mechanisms to bring self-interested customers into the control loop so that network-wide objectives and constraints can be achieved by inducing the desired customer behaviors through proper incentives.


Indeed, there is a lot of effort on market-based control algorithms for tapping and coordinating the customer-owned DERs; see, e.g., \cite{mohsenian2010autonomous, Maharjan13, Pedram14,li2016market} for demand management, \cite{Liu08,zhou2017incentive2} for voltage regulation, and~\cite{vrettos2016robust} for frequency regulation. 
In particular, in \cite{zhou2017incentive2} we consider a social welfare optimization problem that captures the operational and economic objectives of both network operator and customers as well as the voltage constraints, and design an online optimization framework based on a primal-dual gradient algorithm such that the network operator and customers pursue the given operational and economic objectives while concurrently ensuring that the voltages are within the prescribed limits. There are, however, a few important limitations with the existing work; notably, discreteness in decision variables for certain devices such as A/Cs and batteries is mostly ignored. 
Discrete decision variables make the problem non-convex, for which in general no efficient algorithms exist. In this paper, we propose a practical stochastic optimization approach to address the problem of discrete decision variables. 

Specifically, we formulate a general multi-period social welfare maximization problem (minimum cost problem, in terms of minimization) with both continuous and discrete decision variables as well as device dynamics for managing DERs, and introduce a convex relaxation of the problem by replacing discrete feasible sets with their convex hulls. We then propose a distributed stochastic algorithm that recovers discrete decision variables randomly according to convex combination coefficients from the dual gradient algorithm for the relaxed problem.  
The resulting algorithm is distributed, where the self-interested customers update their devices' power setpoints based on local constraints and individual cost functions for given incentive signals and the network operator updates the incentive signals based on network-wide operational constraints. 



The convergence of the dual gradient method can be hard to characterize, as the dual function may be non-smooth and not strongly concave.  
The additional stochasticity of our algorithm makes it more challenging. In literature diminishing stepsizes are usually necessary for convergence (see,  e.g., \cite{ram2009incremental,zhu2012distributed,polyak1987introduction,boyd2006subgradient,zhou2017stochastic}), which may not be practical in, e.g., a real-time and/or asynchronous setting. Nevertheless, we leverage recent insights in the dual method to characterize the convergence of the proposed stochastic dual algorithm with {constant} stepsizes. 
To the best of our knowledge, this is a first convergence characterization of its kind for the dual method applied in power systems. 

Notice that, due to the intermittent {renewable} generation and the uncontrollable loads, the operating condition of power grid may change at a fast timescale, which allows only a few iterations of the above-mentioned algorithm. Moreover, different DER devices may be featured with different timescales in control; e.g., devices with continuous decision variables can update power setpoints at relatively fast timescales, while those with discrete decision variables may only be able to update at relatively slow timescales. We therefore extend the proposed stochastic dual algorithm to a practical online realtime setting where 1) during each timeslot the algorithm can only run one or a few iterations in order to track the time-varying ``optimal'' operating point and 2) devices may update power setpoints asynchronously at different times. Also notice that, while we use a linearized power flow model to guide tractable algorithm design, our realtime algorithm will leverage the measured values of relevant electrical variables on the power system to account for the nonlinear power flows as well as reduce communication overhead.  The resulting real-time feedback receding horizon control (RHC) algorithm provides a general online stochastic optimization algorithm for coordinating networked DERs with discrete power setpoints and dynamics to meet operational and economic objectives and constraints. We further characterize its convergence analytically and evaluate its performance numerically.

\subsection{Related Work}
We now briefly review some related work besides those already mentioned in the above. 


\paragraph{Discrete Decision Variables}
Algorithms designed for discrete decision variables in power systems roughly fall into two categories: deterministic but usually generate suboptimal solutions, see, e.g., \cite{bernstein2015design, liu1992discrete}; and stochastic but often lack rigorous performance characterization, see, e.g., \cite{kim2013scalable,macfie2010proposed}. Some of (centralized) deterministic algorithms, {e.g., \cite{ashraphijuo2016strong},}  rely on (commercial) solvers which may not be scalable to very large systems to meet the requirement of realtime control. 

\paragraph{Controlling Devices with Dynamics}
Controllable devices with dynamics are usually handled in two ways: control based on heuristic or engineering intuition, see, e.g., \cite{mathieu2013state} that controls TCLs based on temperature status; and optimization-based control that can integrate specific objective functions and constraints, see, e.g., \cite{li2011optimal} that considers device dynamics but does not involve discrete variables, \cite{tsui2012demand} that considers device dynamics and discrete decision variables but employs commercial solvers,  {\cite{kraning2014dynamic,mhanna2016fast} that solve OPF over various devices with dynamics but consider equality constraints  only, and \cite{zamzam2018optimal, anjos2019decentralized} that formulate mixed-integer linear programs which cannot be applied to solving more general problems with convex cost functions. }

\paragraph{Market-Based/Demand-Response Design}
Existing literature on market-based and demand-response problem formulation mostly focuses on demand/supply balancing, without considering network structure, see, e.g.,\cite{mohsenian2010autonomous,Pedram14,Maharjan13,Tushar14,li2016market,li2011optimal,tsui2012demand}. On the other hand, those that are network-cognizant often require complex communication schemes and do not leverage feedback to reduce communication, see, e.g., \cite{li2012demand,LinaCDC15}. 

\paragraph{Distributed Optimization and Control of {Distribution} Grids} While our modeling framework and algorithm apply to general operational constraints, one of the major applications is distributed voltage regulation that has recently drawn considerable research; see, e.g.,  \cite{zhou2018reverse,simpson2017voltage,baker2017network, dall2017optimal}. 
Online optimization with feedback control is also studied in, e.g., \cite{hauswirth2017online, dallanese2016optimal, gan2016online, bolognani2015distributed} to deal with nonlinear power flow models. 
We also refer to \cite{molzahn2017survey} and references therein for a recent survey on distributed optimization and control of power systems.

{
	\paragraph{Comparison with~\cite{zhou2017incentive2}}
	In \cite{zhou2017incentive2} we have proposed a real-time voltage regulation framework, based on a primal-dual gradient algorithm. The present work significantly extends \cite{zhou2017incentive2} in the following aspects: 1) We consider  a more general model with a broader class of controllable  assets, including DERs with discrete decision variables and with temporally-correlated constraints. We introduce  a randomized step to efficiently cope  with discrete decision variables and a receding horizon control (RHC) method to handle the multi-period setting. 2) We employ a dual algorithm, which exhibits better convergence property than the primal-dual gradient algorithm proposed in \cite{zhou2017incentive2}. 3) We consider and analyze a more realistic scenario where the algorithmic updates are  asynchronous. 4) The simulations include a considerably larger number of  DERs than \cite{zhou2017incentive2} (768 devices v.s. 18 devices). }


%
The rest of the paper is organized as follows. Section~\ref{sec:model} presents power network model along with device models, and formulates the social welfare optimization problem. Section~\ref{sec:off} presents convex relaxation and an offline distributed stochastic dual algorithm for solving the problem. Section~\ref{sec:online} extends the algorithm to a practical realtime setting with time-varying operating conditions, asynchronous updates, and feedback. Section~\ref{sec:simulation} provides numerical examples to evaluate the performance of the algorithm, and Section~\ref{sec:conclusion} concludes the paper. 

\begin{table}
\begin{center}
\begin{tabular}{|ll|}   
	\hline 
        $\cN$ & Set of nodes, excluding node $0$; $\cN:=\{1, ..., N\}$\\
        $\cE$ & Set of distribution lines\\
	$\cD_i$ & Controllable DERs at node $i$\\ 
	$\cD^t_i$ & Controllable DERs at node $i$ that update setpoint at time $t$\\ 
        $p^t_{i,0},q^t_{i,0}$ & Non-controllable power injection of node $i$ at time $t$\\
        $p^t_{i,d},q^t_{i,d}$ & Power injection from DER $d$ at node $i$ at time $t$\\ 
	$\mathcal{Z}^t_{i,d}$ & Feasible set of DERs $d$ at node $i$ \\
        $\mathcal{Z}^t_i$ & $\mathcal{Z}_i^t=\bigtimes_{d\in\cD_i}\mathcal{Z}^t_{i,d}$\\
        $\mathcal{Z}^t$ &$\mathcal{Z}^t=\bigtimes_{i\in\hN}\mathcal{Z}^t_i$\\
        $z^t_i$ &  $z^t_i = \{p^t_{i,d},q^t_{i,d}\}_{d\in\cD_i}\in\mathcal{Z}^t_i$\\
        $z^t$ & $z^t = \{p^t_{i,d},q^t_{i,d}\}_{d\in\cD_i}^{i\in\cN}\in\mathcal{Z}^t$\\
        $x^t_{i,d}$& Status of device $d$ from node $i$ at time $t$\\
        $x_i^t$ & $x_i^t=\{x_{i,d}^t\}_{d\in\cD_i}$\\
        $x^t$ & $x^t=\{x_{i,d}^t\}^{i\in\cN}_{d\in\cD_i}$\\
        $p^t_i,q^t_i$ & Aggregated power injections of node $i$ at time $t$,\\
	& $p_{i}^t=p_{i,0}^{t}+\sum_{d\in\cD_i}p^t_{i,d}$, $q^t_{i} =q_{i,0}^{t}+\sum_{d\in\cD_i}q^t_{i,d}$ \\
	$p^t,q^t$ &  $p^t := [p^t_1, \ldots, p^t_N]^{\top}$, $q^t := [q^t_1, \ldots, q^t_N]^{\top}$ \\
        $y_i^t, \hat{y}_i^t$  &  Actual and linearized system states of node $i$ at time $t$\\
        $y^t, \hat{y}^t$ & $y^t := [y^t_1, \ldots, y^t_N]^{\top}$, $\hat{y}^t := [\hat{y}^t_1, \ldots, \hat{y}^t_N]^{\top}$ \\
        $\alpha_i^t,\beta_i^t$ & Signal for real and reactive power of node $i$ at time $t$\\
        $\alpha^t,\beta^t$ & $\alpha^t := [\alpha^t_1, \ldots, \alpha^t_N]^{\top}$, $\beta^t := [\beta^t_1, \ldots, \beta^t_N]^{\top}$ \\
        $W_t^w$ & Time window $\{t, t+1, \cdots, t+w\}$\\
	$A,B,c$ & System states linearization parameters\\
	$I$& Identity matrix\\
	$\bm{1}$ & Vector collecting all ones\\
        \hline 
\end{tabular}
\caption{Notation. For notational simplicity, we apply \textbf{bold} symbols with superscript $t$ to represent the stacked variables within the $w$ timeslots starting from time $t$, e.g., $\bm{z}^{t}=[(z^{t})^\top,\ldots,(z^{t+w})^\top]^{\top}$. Similarly, functions denoted with a bold letter are vector-valued function over $w$ time steps. Extra subscript $t$ in online algorithm denotes the time instants when the variables are updated. 
$\|\cdot\|_F$ denotes the Frobenius norm, and $\|\cdot\|$ without subscript denotes $L^2$-norm.}\label{tab:not} 
\end{center}
\end{table}


\section{System Model and Problem Formulation}\label{sec:model}

\subsection{Network Model}

Consider a distribution network with $N+1$ nodes collected in the set $\cN \cup \{0\}$, $\cN:=\{1, ..., N\}$, and distribution lines represented by the set $\cE$. Let $p_i^t \in \mathbb{R}$ and $q_i^t \in \mathbb{R}$ denote the aggregate net active  and reactive power injections, respectively,  at node $i \in \cN$ at time $t$.  Further, let $y^t$ be a vector collecting certain electrical quantities of interest; for example, voltage magnitudes at some selected nodes, current on some lines, or power flows at the substation.  The electrical quantities collected in $y^t$ are related to $p_i^t$ and $q_i^t$ via a nonlinear relationship that follows from Ohm's and Kirchhoff's Laws. In this paper, we utilize the following approximate linear relationship:  
\begin{eqnarray}\label{eq:linearization}
y^t \approx\hat{y}^t=Ap^t+Bq^t+c,
\end{eqnarray}
where $A,B$ and $c$ are linearization parameters that can be computed as shown in, e.g.,~\cite{christakou2013efficient, linModels} and pertinent references therein. It is worth pointing out that the linearized model~\eqref{eq:linearization} is utilized to facilitate the design of computationally-light algorithms that admit a real-time implementation. In Section~\ref{sec:online}, we will show how to leverage  appropriate measurements from the distribution grid and DERs to cope with the inaccuracies in the representation of the AC power flows and we will establish appropriate convergence claims. As shown in~\cite{christakou2013efficient, linModels}, the linear model~\eqref{eq:linearization} can be built in the way to account for an unbalanced system operation and for both wye and delta connections.

\subsection{Node \& Device Models}

At node $i\in\cN$, assume that non-controllable devices and controllable devices contribute to the aggregate net power injections $p_i^t \in \mathbb{R}$ and $q_i^t \in \mathbb{R}$. Denote by $p_{i,0}^t$ and $q_{i,0}^{t}$ the overall active and reactive powers injected by all the \emph{non-controllable} devices at node $i$. On the other hand,  a number of controllable devices are collected in a set $\cD_i$. Denote by $p^t_{i,d} \in \mathbb{R}$ and $q^t_{i,d} \in \mathbb{R}$ the active and reactive power injections of a controllable device $d\in\cD_i$ at time $t$. It follows that the powers at node $i$ can be expressed as:
	\begin{eqnarray}
	p_{i}^t=p_{i,0}^{t}+\sum_{d\in\cD_i}p^t_{i,d}, \ \ q^t_{i} =q_{i,0}^{t}+\sum_{d\in\cD_i}q^t_{i,d}.\label{eq:aggregator}
	\end{eqnarray} 

Power injection/consumption $z_{i,d}^t:=[p_{i,d}^t, q_{i,d}^t]^{\top}$ of device $d$ at time $t$ is constrained by  the set $\cZ^t_{i,d} \subset \mathbb{R}^2$ of available power setpoints, i.e.,  $z_{i,d}^t \in \cZ^t_{i,d}$. Denote by $W_t^w = \{t, t+1, \cdots, t+w\}$ the time window from time $t$ up to time $t+w$, and $\bm{z}_{i,d}^t:=[(z_{i,d}^{t})^\top,\ldots,(z_{i,d}^{t+w})^\top]^{\top}$ the power trajectory and $\bm{\cZ}^t_{{i,d}}$
the corresponding feasible set of device $d$ within this time window. 
We consider the following two types of controllable devices. 

\vspace{.1cm}

\noindent \textbf{Devices with convex sets:}  Devices with power injections/consumptions that can be chosen from a \emph{convex and compact} set. These devices are assumed to be fast responding, in the sense  that they regulate the power output to given commands within seconds. We collect these devices in the set $\cD_{F_i}\subseteq\cD_i$. 
For example, the feasible region of a  PV inverter $d\in\cD_{F_i}$ has the following form:
\begin{align} 
\cZ_{i,d}^t =  \left\{ z^t_{i,d} \big|  0 \leq p^t_{i,d}  \leq  p_{i,d}^{\text{av},t},~p_{i,d}^{t2} + q_{i,d}^{t2} \leq  \eta_{i,d}^2 \right\},\label{eq:pv}
\end{align}
where $p_{i,d}^{\text{av},t}$ denotes the available active power from a PV system at time $t$ (based on prevailing ambient conditions), and  $\eta_{i,d}$ is the rated apparent capacity. 
{In this case, we have $\bm{\cZ}_{i,d}^t = \bigtimes_{\tau=t}^{t+w}\cZ_{i,d}^{\tau}$, where $\bigtimes$ denotes Cartesian product.}

\vspace{.1cm}

\noindent \textbf{Devices with discrete sets:}  Devices that admit a {\em discrete set} of possible setpoints. Control actions of these devices are usually implemented at a slower timescale. Collect these devices in the set $\cD_{S_i}\subseteq\cD_i$, and denote by ${\cZ^t_{i,d}=} \mathbb{P}_{i,d}$ the set of power setpoints of a device $d\in\cD_{S_i}$; i.e., 
\begin{eqnarray}
z_{i,d}^t\in \mathbb{P}_{i,d}.\label{eq:discretechoice}
\end{eqnarray}

The devices may also feature states with given  dynamics. For a device $d$, let $\bm{x}_{i,d}^t:=[{x}_{i,d}^t,\ldots,{x}_{i,d}^{t+w}]^{\top}$ collect the states of the device from the current time $t$ up to  time $t+w$ (i.e., within the time window $W_t^w$). 
 We postulate an affine relationship between $\bm{x}_{i,d}^t$ and $\bm{z}_{i,d}^t$; i.e.,
\begin{eqnarray}
\bm{x}_{i,d}^{t}=\bm{f}^t_{i,d}(\bm{z}_{i,d}^{t}), \label{eq:evo}
\end{eqnarray}
{ where  $\bm{f}^t_{i,d}(\bm{z}_{i,d}^{t}):=[{f}^t_{i,d}(z_{i,d}^{t}),\ldots,{f}^{t+w}_{i,d}(\bm{z}_{i,d}^{t})]^{\top}$, with each $f_{i,d}^{\tau}(z_{i,d}^{t},\ldots,z_{i,d}^{\tau})$ being an affine function of power injections from $t$ to $\tau$ for all $\tau\in W_t^w$. This affine relation assumes that  the device dynamics are linear (or approximately linear) with power injection vector. We will provide two illustrative examples shortly.}
On the other hand, constraints on the state are modeled via a general \emph{convex} vector-valued function $\bm{\psi}_{i,d}^t$:
\begin{eqnarray}
\bm{\psi}_{i,d}^t(\bm{x}_{i,d}^{t}) \leq 0.\label{eq:statebound}
\end{eqnarray}
By substituting \eqref{eq:evo} into \eqref{eq:statebound}, we obtain the following constraint: 
\begin{eqnarray}
\bm{\psi}_{i,d}^t(\bm{f}^t_{i,d}(\bm{z}_{i,d}^{t})) \leq 0.\label{eq:statebound0}
\end{eqnarray}
{In this case, we have $\bm{\cZ}_{i,d}^t = \{\bm{z}_{i,d}^t |~\bm{z}_{i,d}^t\in \bigtimes_{\tau=t}^{t+w}\cZ_{i,d}^{\tau}~\&~\text{\eqref{eq:statebound0}}\}$.}

The set $\bm{\cZ}^t_{{i,d}}$ is convex if $d\in\cD_{F_i}$; on the other hand, it is \emph{discrete} and \emph{non-convex} if $d\in\cD_{S_i}$.

In the following, we provide two examples.

\noindent~~~~\emph{a)} For a heating, ventilation, and air conditioning (HVAC) system $d\in\cD_{S_i}$, let $x_{i,d}^{t}$ be the room temperature (i.e., the state) to be controlled at time $t$. The dynamics (\ref{eq:evo}) are, in this case,  in the form:
\begin{eqnarray}
x_{i,d}^{t+m}&\hspace{-7pt}=&\hspace{-7pt}T_{0,i,d}^{t+m}\!+\!\!\!\sum_{\tau=0}^{m-1}(1-\zeta_1)^{m-1-\tau}
\zeta_2 p_{i,d}^{t+\tau}.\label{eq:tp1}
\end{eqnarray}
Here, $T_{0,i,d}^{t+m}$ is a constant characterized as
\begin{eqnarray}
	\hspace{-3mm}T_{0,i,d}^{t+m}&\hspace{-7pt}=&\hspace{-7pt}(1-\zeta_1)^m x_{i,d}^{t}+\sum_{\tau=0}^{m-1}(1-\zeta_1)^{m-1-\tau}
	\zeta_2 T^{t+\tau}_{out,i,d} , 
\end{eqnarray}
where $T^{t+\tau}_{out,i,d}$ is the ambient temperature, and $\zeta_1, \zeta_2$ are parameters specifying thermal characteristics of the HVAC and its operating environment; see, e.g., \cite{li2011optimal}. The constraint (\ref{eq:statebound}) can be 
\begin{eqnarray}
\underline{\bm{T}}_{i,d}^{t}\leq \bm{x}_{i,d}^{t}\leq \overline{\bm{T}}_{i,d}^{t},\label{eq:tp3}
\end{eqnarray}
which is to confine the room temperatures  within a given comfort range $[\underline{\bm{T}}_{i,d}^{t},\overline{\bm{T}}_{i,d}^{t}]$. We  assume that the HVAC system can  be turned on/off; that is, 
\begin{eqnarray}
p_{i,d}^t\in\{p_{i,d}^{\text{on}},0\},~\forall t,\label{eq:ACpower}
\end{eqnarray}
with working power rate $p_{i,d}^{\text{on}}<0$. Then, the feasible set from $t$ to $t+w$ is $\bm{\cZ}^t_{i,d}=\{\bm{z}^t_{i,d}\,|\,\text{\eqref{eq:tp1}--\eqref{eq:ACpower}}\}$.

\noindent~~~~\emph{b)} For a {battery} 
$d\in\cD_{S_i}$, $x_{i,d}^t$ represents its state of charge (SOC)  at time $t$.   The dynamics (\ref{eq:evo}) for the evolution of SOC takes the form\footnote{Here we have assumed that the charging and discharging efficiencies are 1. 
We can deal with arbitrary battery efficiencies in the same way as in \cite{stai2018dispatching} that approximates the losses using an extended model; however, for simplicity of exposition, we consider an efficiency of 1. 
}
\begin{eqnarray}
x_{i,d}^{t+m}=x_{i,d}^t+\sum_{\tau=0}^{m-1} p_{i,d}^{t+\tau}, \label{eq:e1}
\end{eqnarray}
for $m=1,\ldots,w$. The constraint (\ref{eq:statebound}) in this case takes the following form:
\begin{eqnarray}
\underline{\bm{E}}_{i,d}^{t}\leq \bm{x}_{i,d}^{t}\leq \overline{\bm{E}}_{i,d}^t,\label{eq:e2}
\end{eqnarray}
with an acceptable SOC range of $[\underline{\bm{E}}_{i,d}^{t}, \overline{\bm{E}}_{i,d}^t]$. Further, a discrete set of available charging rates can be written as: 
\begin{eqnarray}
p_{i,d}^t\in\{p_{i,d}^{\text{dis},1},\ldots, p_{i,d}^{\text{dis},M},0, p_{i,d}^{\text{cha}, 1},\ldots, p_{i,d}^{\text{cha}, N}\},~\forall t, \label{eq:EVpower}
\end{eqnarray}
{where a negative charging rate means discharging.} The EV battery's feasible set from $t$ to $t+w$ is then $\bm{\cZ}^t_{i,d}=\{\bm{z}^t_{i,d}|\text{\eqref{eq:e1}--\eqref{eq:EVpower}}\}$.





\subsection{Problem Formulation}
\label{sec:formulation}
We aim to design an algorithmic strategy where  network operator and the DER-owners pursue their own operational goals and economic objectives, while concurrently achieving global coordination to enforce engineering constraints. For simplicity of exposition,  consider the following notation:
	\begin{align*}
	 &\bm{\cZ}^t_{i}:=\underset{d\in\cD_{i}}{\bigtimes}\bm{\cZ}^t_{i,d},&&\hspace{-15mm}\bm{\cZ}^t:=\underset{i\in\cN}{\bigtimes}\bm{\cZ}^t_{i},
	 \end{align*}
	\begin{align*}
	&\bm{z}^t_i= \{\bm{z}^t_{i,d}\}_{d\in\cD_i}\in\bm{\cZ}^t_{i},&&\hspace{-10mm} \bm{z}^t = \{\bm{z}^t_i\}_{i\in\cN}\in\bm{\cZ}^t.
	\end{align*}
We now describe pertinent optimization problems that model operational goals and constraints of network operator and the DER-owners. For simplicity of exposition, we outline the problem formulation under the presumption that one customer/DER-owner is located at a node of the electrical network; however, the proposed methodology is applicable to the case where multiple customers are located at a node (this is the case, for example, where multiple houses are connected to the same distribution transformer). 

\subsubsection{\textbf{Customer-Level Problem}}
Let  $\bm{\alpha}^{t}_i=[\alpha_i^t,\ldots,\alpha_i^{t+w}]^{\top}\in\mathbb{R}^{w+1}$ and $\bm{\beta}^{t}_i=[\beta_i^t,\ldots,\beta_i^{t+w}]^{\top}\in\mathbb{R}^{w+1}$ be vectors collecting ``incentive signals'' that are sent by the network operator to customer $i$ within the time window $W_t^w$ for real and reactive power, respectively. Let $x_i^t:=\{x_{i,d}^t\}_{d\in\cD_i}$ be a vector collecting  states of all devices of node $i$ at time $t$, and consider a cost function $C^t_{i}(\bm{z}^{t}_i,\bm{x}_i^t)$ that captures well-defined performance objectives of all devices at node $i$ within the time window $W_t^w$. Since $\bm{x}_i^t$ is in fact a function of $\bm{z}^{t}_i$, we will henceforth write the cost as $C^t_{i}(\bm{z}^{t}_i)$. Notice further that the cost function captures objectives over the time window $W_t^w$, and it can be expanded as an additive form, e.g., 
\begin{eqnarray}
C^t_{i}(\bm{z}^{t}_i)=\sum_{\tau=t}^{t+w}\sum_{d\in\cD_i} cost^{\tau}_{i,d}(z^{\tau}_{i,d}),\nonumber
\end{eqnarray}
where $cost^{\tau}_{i,d}(z^{\tau}_{i,d})$ denotes the cost function for device $d$ at time $\tau$.

The discrete nature of the control actions associated with devices in the sets $\{\cD_{S_i}\}$ would render pertinent optimization problems nonconvex. {Consider then utilizing the convex hull of the set $\mathbb{P}_{i,d}$ of available power setpoints, defined as follows \cite{boyd2004convex}: 
\begin{eqnarray}
	&&conv(\mathbb{P}_{i,d}):=\nonumber\\
	&&\Big\{\sum_{m=1}^{|\mathbb{P}_{i,d}|}\theta^m_{i,d}\cdot p^{m}_{i,d}\Big|p^{m}_{i,d}\in\mathbb{P}_{i,d}, \theta^m_{i,d}\geq 0, \sum_{m=1}^{|\mathbb{P}_{i,d}|}\theta^m_{i,d}=1\Big\},\nonumber
\end{eqnarray}
where $|\mathbb{P}_{i,d}|$ denotes the cardinality of the set $\mathbb{P}_{i,d}$. With a slight abuse of notation, we next define
\begin{eqnarray}
conv(\bm{\mathcal{Z}}^t_{{i,d}}) := \big\{\bm{z}_{{i,d}}^t|~\text{\eqref{eq:statebound0}}~\&~z_{{i,d}}^{\tau} \in conv(\mathbb{P}_{i,d}),\nonumber\\
 \tau=t,\ldots,t+w \big\}\label{eq:rels}
\end{eqnarray}
as the relaxed set of $\bm{\mathcal{Z}}^t_{{i,d}}$.} The relaxed set $conv(\bm{\mathcal{Z}}^t_{i,d})$ is \emph{convex and compact}. Accordingly, consider the following convex sets:
\begin{eqnarray}
conv(\bm{\mathcal{Z}}_i^t):=\bigg(\underset{d\in\cD_{F_i}}{\bigtimes} \bm{\mathcal{Z}}^t_{i,d} \bigg)\bigtimes\bigg(\underset{d\in\cD_{S_i}}{\bigtimes} conv(\bm{\mathcal{Z}}^t_{i,d})\bigg),\nonumber
\end{eqnarray}
and
\begin{eqnarray}
conv(\bm{\mathcal{Z}}^t):=\underset{i\in\cN}{\bigtimes} conv(\bm{\mathcal{Z}}_i^t).
\end{eqnarray}
While a convex hull {of the set of available power setpoints} is utilized for the algorithmic design, a randomization strategy will be leveraged to recover {these available} control actions from $conv(\bm{\mathcal{Z}}^t_{{i,d}})$. 


For given vectors $(\bm{\alpha}^{t}_i,\bm{\beta}^{t}_i)$, and for a given optimization horizon of $w+1$ time slots, the following problem is solved at node $i$ at time $t$:
\begin{subequations}\label{eq:p1}
	\begin{eqnarray}
	&\hspace{-6mm}(\bm{\mathcal{P}^{t}_{1,i}}) \,\,\, \underset{\bm{z}_i^t}{\min} & C^{t}_{i}(\bm{z}^{t}_i)-\sum_{\tau=t}^{t+w}\sum_{d\in\cD_i}({\alpha}^{\tau}_i {p}^{\tau}_{i,d}+{{\beta}_i^{\tau}} {q}^{\tau}_{i,d}),\label{eq:obj_c}\\
	&\hspace{-6mm}\mathrm{s.t.}&\bm{z}_{i}^{t} \in  conv(\bm{\cZ}^{t}_i), \label{eq:pccon_c}
	\end{eqnarray}
\end{subequations}
where the terms $\alpha^{\tau}_i p^{\tau}_{i,d}$ and $\beta^{\tau}_i q^{\tau}_{i,d}$ are utilized to enable responsiveness to the network-level signals $(\bm{\alpha}^{t}_i,\bm{\beta}^{t}_i)$ (which can be interpreted as incentives or prices), and device-specific states are naturally accounted for in constraint \eqref{eq:pccon_c}. The following assumption is imposed.

\begin{assumption}\label{as:Cinv}
		Functions $C^t_{i}(\bm{z}^{t}_i)$ for all $i\in\cN$ are strongly convex in $\bm{z}^{t}_i\in conv(\bm{\cZ}^t_{i})$; i.e., there exists some $\sigma_c>0$ such that $\nabla^2_{\bm{z}_i}C^t_{i}(\bm{z}^{t}_i)\geq\sigma_c I$.
		Moreover, the gradient of $C^t_{i}(\bm{z}^{t}_i)$ with respect to $\bm{z}^{t}_i$ is Lipschitz continuous for any $\bm{z}^{t}_i\in conv(\bm{\cZ}^t_{i})$.
\end{assumption}

%

{
\begin{remark}
	Assumption~\ref{as:Cinv}  is standard, and it is utilized here to facilitate the derivation of the convergence results; see, e.g., \cite{BeT89,boyd2004convex,zhou2017incentive2,dallanese2016optimal}. In practice, quadratic cost functions or piece-wise quadratic cost functions are often used in the market models. These functions satisfy Assumption~\ref{as:Cinv}. Moreover, if the cost functions associated with some DERs are not strongly convex, we can add a regularization term to obtain a strongly convex cost. 
	\end{remark}
}

Since (\ref{eq:obj_c}) is strongly convex  and that the constraints are convex, 
a unique solution $\bm{z}_i^{t*}$ exists for given $\bm{\alpha}_i^t$ and $\bm{\beta}_i^t$, represented as the following mapping: 
\begin{eqnarray}
\bm{z}_i^{t*}=\bm{b}_i^t(\bm{\alpha}^t_i, \bm{\beta}^t_i).
\end{eqnarray}
{
Given Assumption~\ref{as:Cinv}, function $\bm{b}_i^t$ is usually continuous. For example, $\bm{b}_i^t$ is piece-wise linear if $C^t_{i}(\bm{z}^{t}_i)$ is quadratic and $conv(\bm{\cZ}^{t}_i)$ is a box constraint. Nevertheless, there is no requirement on the continuity of $\bm{b}_i^t$. As will be seen in Section~\ref{sec:off}, solving for $b_i^t$ is not required by considering a  convex relaxation (which will turn out to be exact).
}
%

{
\begin{remark}
Notice that the relaxed set $conv(\bm{\mathcal{Z}}^t_{{i,d}})$ defined in Eq.~\eqref{eq:rels} is not the convex hull of the non-convex set $\bm{\mathcal{Z}}^t_{{i,d}}$. Given a non-convex set (or non-convex constraints), there may be different ways to convexify it. Different convexification schemes may incur different computational and description complexities (as well as result in different numerical stabilities). In our convexification scheme, we only convexify those non-convex constraints, i.e., Eq.~\eqref{eq:discretechoice}. Such a scheme has minimum complexity for our problem. 
\end{remark}
}

\subsubsection{\textbf{Social-Welfare Problem}} Let $g^t(y^t)$ be an affine function of the electrical quantities of interest $y^t$, and let the following constraints capture network-level operational constraints at time $t$:
\begin{eqnarray}
g^t(\hat{y}^t)\leq 0 \, .\label{syscons}
\end{eqnarray}
By Eqs.~\eqref{eq:linearization}--\eqref{eq:aggregator}, $\hat{y}^t$ is  affine in $z^t$. For future development, define the vector-valued function $\bm{g}^t(\bm{z}^{t}):=[g^t(\hat{y}^t(z^{t}))^{\top},\ldots,g^{t+w}(\hat{y}^{t+w}(z^{t+w}))^{\top}]^{\top}$ within the time window $W_t^w$,  which is affine in $\bm{z}^{t}$. Notice that the operational constraints are usually ``independent'' in the sense that none of them will subsume any other. We therefore have the following assumption.
{\begin{assumption}\label{ass:g}
Function  $\bm{g}^t(\bm{z}^{t})$ is an affine function of $\bm{z}^{t}$.
\end{assumption}}

With Assumption~\ref{ass:g} and the boundedness of the set $conv(\bm{\cZ}^t)$, the Jacobian of $\bm{g}^t(\bm{z}^t)$ is bounded over $conv(\bm{\cZ}^t)$; i.e., there exists some constant $\sigma_g>0$ such that 
\begin{eqnarray}
\|\nabla_{\bm{z}} \bm{g}^t(\bm{z}^t)\|_F\leq \sigma_g,\ \forall \bm{z}^t\in conv(\bm{\cZ}^t),
\end{eqnarray}
where $\|\cdot\|_F$ denotes the Frobenius norm.

With these definitions in place, the following optimization problem is formulated to minimize the aggregate cost incurred by the customers, subject to network constraints: 
\begin{subequations}
\begin{eqnarray}
\hspace{-6mm} (\bm{\mathcal{P}^{t}_2})& \min&\sum_{i\in\cN}C^{t}_{i}(\bm{z}_i^{t})\label{eq:obj}\\
 \hspace{-6mm}&\mathrm{over}& \bm{z}^t,\hat{\bm{y}}^t,\bm{\alpha}^t,\bm{\beta}^t\\
\hspace{-6mm}&\mathrm{s.t.} & \bm{p}^{t}_i=\bm{p}_{i,0}^{t}\!+\!\!\!\sum_{d\in\cD_i}\bm{p}^{t}_{i,d},\ i\in\cN,\\[-2pt]
\hspace{-6mm}&&\bm{q}^{t}_i=\bm{q}_{i,0}^{t}\!+\!\!\!\sum_{d\in\cD_i}\bm{q}^{t}_{i,d},\ i\in\cN,\\[-2pt]
\hspace{-6mm} &&\hat{y}^{\tau}= Ap^{\tau} + Bq^{\tau} + c,\ \tau\in W_t^w,\label{eq:volt0}\\
\hspace{-6mm} &&\bm{g}^{t}(\hat{\bm{y}}^{t})\leq 0, \label{eq:volt}\\
\hspace{-6mm} && \bm{z}_i^{t}=\bm{b}^t_{i}(\bm{\alpha}^{t}_i,\bm{\beta}^{t}_i),\ \forall i\in\hN,\label{eq:pccon}
\end{eqnarray}
\end{subequations}
where constraint \eqref{eq:pccon} explicitly accounts for the  solution of the customer-level problem $(\bm{\mathcal{P}^{t}_{1,i}})$. Notice that $\bm{\alpha}^t$ and $\bm{\beta}^t$ are optimization variables, and they are designed in a way that customer responses \eqref{eq:pccon} do not lead to violation of network constraints. 

Unfortunately, constraint \eqref{eq:pccon} renders problem $(\bm{\mathcal{P}^t_2})$ nonconvex as it is usually not affine. 
Before outlining the solution approach to address nonconvexity brought in by \eqref{eq:pccon} (see Section~\ref{sec:exactrelax}), we briefly explain how to recover discrete control actions for the devices $\cD_{S_i}$ next.

\subsubsection{\textbf{Recovering Device-Specific Feasible Power Setpoints}}
For a device $d \in \cD_{S_i}$ with discrete power commands,\footnote{Since we have a mix of DERs with discrete commands and continuous commands, we use $p_{i,d}^t$ instead of $z_{i,d}^t$ when considering recovered feasible power setpoints. Similar arguments apply to the reactive power.} the solution $\bm{p}_{i,d}^{t*}$ of the problem $(\bm{\mathcal{P}^{t}_{1,i}})$ may not be  implementable, since it is computed based on the convex hull of the discrete set $\mathbb{P}^t_{i,d}$. To recover a feasible  command implementable { at device $d$} at time $t$,  notice  that $p^{t*}_{i,d}\in conv(\mathbb{P}^t_{i,d})$ can be written as a convex combination $\sum_{m=1}^{|\mathbb{P}^t_{i,d}|} \theta^m_{i,d} p^{tm}_{i,d}$ of certain points $p^{tm}_{i,d}$ in $\mathbb{P}^t_{i,d}$, where ${\theta^m_{i,d} \geq 0}$ are such that $\sum_{m=1}^{|\mathbb{P}^t_{i,d}|}  \theta^m_{i,d}=1$. Then,  a feasible power command  $p^{tm}_{i,d}\in\mathbb{P}^t_{i,d}$ can be randomly selected with probability $\theta^m_{i,d}$.

As an illustrative example, assume that a device has two feasible power commands denoted by ${p}^{t1}_{i,d}$ and ${p}^{t2}_{i,d}$. It follows that ${p}^{t1}_{i,d}\leq p^{t*}_{i,d}\leq {p}^{t2}_{i,d}$. 
{Then, we can select  ${p}^{t1}_{i,d}$ and ${p}^{t2}_{i,d}$ with the following two-point probability distribution:
\begin{eqnarray}\label{eq:probability}
\left \{ 
\begin{array}{l l l }
\text{Prob}(p^t_{i,d}={p}^{t1}_{i,d})&=&({{p}^{t2}_{i,d}-p_{i,d}^{t*}})/({{p}^{t2}_{i,d}-{p}^{t1}_{i,d}})\\
\text{Prob}(p^t_{i,d}={p}^{t2}_{i,d})&=&({p_{i,d}^{t*}-{p}^{t1}_{i,d}})/({{p}^{t2}_{i,d}-{p}^{t1}_{i,d}})
\end{array}\right..
\end{eqnarray}
}

	\begin{remark}\label{rem:imp} The above scheme to recover the device-specific feasible power setpoints requires finding the convex combination coefficients $\theta^m_{i,d}$. The complexity of finding these coefficients is linear in the number of discrete setpoints in $\mathbb{P}^t_{i,d}$ of device $d$, which is relatively small. Since the power setpoint is a scalar for each device, a simple procedure to find these coefficients is to identify the largest setpoint below $p_{i,d}^{t*}$ and the smallest setpoint above $p_{i,d}^{t*}$, and calculate the coefficients corresponding to these two setpoints similar to Eq.~\eqref{eq:probability}, and the coefficients corresponding to other setpoints are zero. Notice that the set of convex combination coefficients may not be unique. 
	\end{remark}

\section{Distributed Stochastic Dual Algorithm}\label{sec:off}

At time $t$,  problem $(\bm{\mathcal{P}^t_2})$ naturally leads to a Stackelberg game where: (i) $\bm{\alpha}^t$ and $\bm{\beta}^t$ are calculated via $(\bm{\mathcal{P}^t_2})$  by the network operator and broadcasted to all nodes $i\in\cN$; and, (ii) each consumer computes the power set points $\bm{z}_i^{t*}$ from $(\bm{\mathcal{P}^t_{1,i}})$. By design, $\bm{z}^{t*}$ is in fact an optimal point for $(\bm{\mathcal{P}^t_2})$.  

In the following, we first recall a result presented in our prior work \cite{zhou2017incentive2} to obtain an exact convex relaxation of $(\bm{\mathcal{P}^t_2})$ regarding non-convex constraint \eqref{eq:pccon}. Then, Section~\ref{sec:iter} and Section~\ref{sec:distributed} will present a  new stochastic dual algorithm to identify optimal points of $(\bm{\mathcal{P}^t_2})$ and to recover feasible power commands for devices in $\{ \cD_{S_i}\}$.

\subsection{Exact Convex Relaxation}\label{sec:exactrelax}
Consider the following convex optimization problem:
\begin{subequations}
\begin{eqnarray}
\hspace{-6mm} (\bm{\mathcal{P}^{t}_3})&\min&\sum_{i\in\cN}C^{t}_{i}(\bm{z}_i^{t})\label{eq:obj2}\\
 \hspace{-6mm}&\mathrm{over}& \bm{z}^t, \hat{\bm{y}}^t\\
\hspace{-6mm}&\mathrm{s.t.} & \bm{p}^{t}_i=\bm{p}_{i,0}^{t}\!+\!\!\!\sum_{d\in\cD_i}\bm{p}^{t}_{i,d},\ i\in\cN,\label{eq:aggregate}\\[-2pt]
\hspace{-6mm}&&\bm{q}^{t}_i=\bm{q}_{i,0}^{t}\!+\!\!\!\sum_{d\in\cD_i}\bm{q}^{t}_{i,d},\ i\in\cN,\\[-2pt]
\hspace{-6mm}&&\hat{y}^{\tau}= Ap^{\tau} + Bq^{\tau} + c,\ \tau\in W_t^w,\label{eq:volt02}\\
\hspace{-6mm}&&\bm{g}^t(\hat{\bm{y}}^{t}) \leq 0,\label{eq:volt2}\\
\hspace{-6mm}&&\bm{z}_i^{t}\in conv(\bm{\cZ}^{t}_i),\ i\in\hN,\label{eq:pccon_c2}
\end{eqnarray}
\end{subequations}
which is obtained from $(\bm{\mathcal{P}^t_2})$ by dropping the nonconvex constraint (\ref{eq:pccon}) and instead adding the device-specific constraints (\ref{eq:pccon_c2}). The following is assumed. 

\begin{assumption}\label{ass:sla}
Problem $(\bm{\mathcal{P}_3^{t}})$ is strictly feasible, i.e.,  it satisfies Slater's condition at each time $t$. 
\end{assumption}

{Assumption~\ref{ass:sla} ensures that strong duality holds for $(\bm{\mathcal{P}^{t}_3})$ \cite{boyd2004convex}.} Given the strong convexity of the cost function  \eqref{eq:obj2} in $\bm{z}^t$, a unique optimal solution exists for problem $(\bm{\mathcal{P}^t_3})$. Leveraging the results of \cite{zhou2017incentive2}, we can show that $(\bm{\mathcal{P}^t_3})$ together with a particular choice of $\bm{\alpha}^t$ and $\bm{\beta}^t$ lead to an exact convex relaxation of $(\bm{\mathcal{P}^t_2})$; i.e., 
the solution of $(\bm{\mathcal{P}^t_3})$ coincides with an optimal solution of $(\bm{\mathcal{P}^t_2})$. Specifically, substitute Eqs.~(\ref{eq:aggregate})--(\ref{eq:volt02}) into (\ref{eq:volt2}), and denote by $\mu^{\tau}$ the dual variable  associated with the constraints~\eqref{eq:volt2} for a given  $\tau$. Let $\hat{\bm{y}}^{t*}$ be the optimal solution of $(\bm{\mathcal{P}^t_3})$, and denote the optimal dual variables associated with~\eqref{eq:volt2} as ${\mu}^{\tau*}$ for time $\tau$. Consider then the following choice for $\bm{\alpha}^t$ and $\bm{\beta}^t$:
\begin{subequations}\label{eq:signal}
	\begin{eqnarray}
		\alpha^{\tau*}&=& -A^{\top}\nabla_{\hat{y}} g^{\tau}(\hat{y}^{\tau*}) \mu^{\tau*},\\
		\beta^{\tau*}&=& -B^{\top}\nabla_{\hat{y}} g^{\tau}(\hat{y}^{\tau*})\mu^{\tau*},
	\end{eqnarray}
\end{subequations}
for all $\tau \in W_t^w$.
Then, Theorems~1--2 in \cite{zhou2017incentive2} can be extended to obtain the following result. 

\begin{theorem}\label{the1}
Under Assumptions~\ref{as:Cinv}--\ref{ass:sla}, it follows that $(\bm{\alpha}^{t*}, \bm{\beta}^{t*})$ defined by \eqref{eq:signal} are bounded. Moreover, the solution of problem $(\bm{\mathcal{P}^t_3})$ along with $(\bm{\alpha}^{t*}, \bm{\beta}^{t*})$ defined in (\ref{eq:signal}) is a globally optimal solution of the nonconvex problem $(\bm{\mathcal{P}^t_2})$.
\end{theorem}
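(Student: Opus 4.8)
The theorem has two parts: (1) that the signals $(\bm{\alpha}^{t*}, \bm{\beta}^{t*})$ defined by \eqref{eq:signal} are bounded, and (2) that the optimal solution of the convex problem $(\bm{\mathcal{P}^t_3})$, together with these signals, constitutes a globally optimal solution of the nonconvex problem $(\bm{\mathcal{P}^t_2})$. The essential point is that $(\bm{\mathcal{P}^t_2})$ is nonconvex \emph{only} through the constraint \eqref{eq:pccon}, which forces $\bm{z}_i^t$ to equal the customer best-response $\bm{b}_i^t(\bm{\alpha}_i^t, \bm{\beta}_i^t)$. The plan is to show that, with the specific pricing choice \eqref{eq:signal}, this best-response constraint is automatically satisfied at the solution of $(\bm{\mathcal{P}^t_3})$, so that the convex relaxation loses nothing.

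**Main line of argument.** The plan is as follows. First I would write down the KKT conditions for the convex problem $(\bm{\mathcal{P}^t_3})$. By Assumption~\ref{ass:sla} (Slater's condition) strong duality holds, and because the cost is strongly convex (Assumption~\ref{as:Cinv}) and the remaining constraints are affine, a unique primal optimum $\bm{z}^{t*}$ exists together with optimal multipliers $\mu^{\tau*}$ for the network constraints \eqref{eq:volt2}. Substituting the affine maps \eqref{eq:aggregate}--\eqref{eq:volt02} to eliminate $\hat{\bm{y}}^t$ and the aggregate injections, the stationarity condition of $(\bm{\mathcal{P}^t_3})$ in the variable $\bm{z}_i^t$ reads (componentwise in $\tau$) as a condition balancing $\nabla C_i^t$ against the term $A^\top \nabla_{\hat{y}} g^\tau \mu^{\tau*}$ coming from the chain rule through \eqref{eq:volt02}--\eqref{eq:volt2}, plus the normal cone of $conv(\bm{\cZ}_i^t)$. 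The second, decisive step is to observe that the customer problem $(\bm{\mathcal{P}^t_{1,i}})$ has objective \eqref{eq:obj_c} whose gradient in $\bm{z}_i^t$ is exactly $\nabla C_i^t(\bm{z}_i^t)$ minus the linear price terms $\alpha_i^\tau, \beta_i^\tau$. Plugging in the definitions \eqref{eq:signal} of $\alpha^{\tau*}, \beta^{\tau*}$, I would check that the KKT/variational-inequality characterization of the unique minimizer of $(\bm{\mathcal{P}^t_{1,i}})$ over the convex set $conv(\bm{\cZ}_i^t)$ coincides \emph{term-for-term} with the stationarity condition of $(\bm{\mathcal{P}^t_3})$ derived above. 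By uniqueness (strong convexity) this forces $\bm{z}_i^{t*} = \bm{b}_i^t(\bm{\alpha}_i^{t*}, \bm{\beta}_i^{t*})$, i.e.\ \eqref{eq:pccon} holds at $\bm{z}^{t*}$. Hence $\bm{z}^{t*}$ is feasible for $(\bm{\mathcal{P}^t_2})$.

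**Closing the optimality gap.** Having shown feasibility, I would finish by a standard relaxation argument: since $(\bm{\mathcal{P}^t_3})$ is obtained from $(\bm{\mathcal{P}^t_2})$ by replacing \eqref{eq:pccon} with the weaker membership \eqref{eq:pccon_c2} while keeping the identical objective \eqref{eq:obj2}=\eqref{eq:obj}, the feasible region of $(\bm{\mathcal{P}^t_3})$ contains that of $(\bm{\mathcal{P}^t_2})$, so the optimal value of $(\bm{\mathcal{P}^t_3})$ lower-bounds that of $(\bm{\mathcal{P}^t_2})$. But $\bm{z}^{t*}$ attains this value and is feasible for $(\bm{\mathcal{P}^t_2})$; therefore it is globally optimal for $(\bm{\mathcal{P}^t_2})$, and the relaxation is exact. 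For the boundedness claim, I would argue that the $\mu^{\tau*}$ are bounded: Slater's condition together with compactness of $conv(\bm{\cZ}^t)$ and continuity of the objective bounds the optimal dual variables (a standard consequence of a Slater point giving a bounded dual optimal set), and since $\nabla_{\hat{y}} g^\tau$ is constant (affine $g$, Assumption~\ref{ass:g}) and $A, B$ are fixed matrices, the linear expressions \eqref{eq:signal} for $\alpha^{\tau*}, \beta^{\tau*}$ are then bounded as well.

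**Anticipated obstacle.** The main obstacle is the careful bookkeeping in matching the two stationarity systems across the variable transformation from $\hat{\bm{y}}^t$ back to $\bm{z}^t$. One must verify that the chain-rule factors $A^\top \nabla_{\hat y} g^\tau$ and $B^\top \nabla_{\hat y} g^\tau$ produced by differentiating the composite constraint in $(\bm{\mathcal{P}^t_3})$ are \emph{exactly} the coefficients appearing in \eqref{eq:signal}, separately for the real- and reactive-power components of each device, and that the normal-cone (feasibility) terms for $conv(\bm{\cZ}_i^t)$ coincide in both problems. Once the correspondence is set up correctly the uniqueness argument is immediate, but the indexing across nodes $i$, devices $d$, and time slots $\tau\in W_t^w$ is where errors would creep in. Since this is precisely the extension of Theorems~1--2 of \cite{zhou2017incentive2} to the multi-period, mixed continuous/discrete setting, I would lean on that prior derivation for the structural identity and concentrate the new verification on the per-time-slot decoupling induced by the additive form of $C_i^t$ and the affine dynamics.
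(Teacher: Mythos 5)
Your proposal is correct and takes essentially the same route as the paper's: the paper does not reprove Theorem~\ref{the1} in-text but explicitly defers to Theorems~1--2 of \cite{zhou2017incentive2}, whose argument is precisely your construction --- matching the KKT/variational-inequality characterization of the Lagrangian subproblem of $(\bm{\mathcal{P}^t_3})$ (with the chain-rule factors $A^{\top}\nabla_{\hat{y}} g^{\tau}\mu^{\tau*}$ and $B^{\top}\nabla_{\hat{y}} g^{\tau}\mu^{\tau*}$) term-for-term with the unique best response in $(\bm{\mathcal{P}^t_{1,i}})$ under the prices \eqref{eq:signal}, so that \eqref{eq:pccon} holds at the relaxed optimum, then closing via the relaxation inequality and obtaining boundedness of $(\bm{\alpha}^{t*},\bm{\beta}^{t*})$ from Slater-implied boundedness of the optimal dual set together with Assumption~\ref{ass:g}. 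Your multi-period extension (decoupling across $\tau\in W_t^w$ using the additive cost and affine dynamics) is exactly the extension the paper asserts, so there is no gap.
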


Hereafter, we will refer to the globally optimal solutions of $(\bm{\mathcal{P}^t_3})$ and $(\bm{\mathcal{P}^t_2})$ interchangeably, depending on the context. In the next section, we will  design a stochastic dual algorithm for solving $(\bm{\mathcal{P}^t_3})$ in an offline or batch setting, and show how to  recover feasible commands for  devices with discrete power levels. Subsequently,  we will develop an online stochastic dual algorithm.

\begin{remark}
From a practical standpoint, $\bm{\alpha}^t$ and $\bm{\beta}^t$ can be interpreted as ``incentive signals'' or ``prices'' that the network operator communicates to customers to ensure that engineering constraints in the network are satisfied. For example, when $y_t$ collects voltage magnitudes and the function $g$ is designed to ensure voltage regulation, $\bm{\alpha}^t$ and $\bm{\beta}^t$ can be understood  as prices of voltage violation. 
\end{remark}

\subsection{Offline Stochastic Dual Algorithm}
\label{sec:iter}

Consider the Lagrangian function associated with $(\bm{\mathcal{P}^t_3})$:
\vspace{-.0cm}
\begin{eqnarray}
L^t(\bm{z}^t,\bm{\mu}^t)=\sum_{i\in\cN}C^{t}_{i}(\bm{z}_i^{t})+\bm{\mu}^{t\top}\bm{g}^{t}(\bm{z}^t),  \label{eq:Lag}
\end{eqnarray}
where $\bm{\mu}^t$ is the vector of dual variables associated with the network constraints~\eqref{eq:volt2}. The Lagrangian~\eqref{eq:Lag} is obtained by substituting Eqs.~\eqref{eq:aggregate}--\eqref{eq:volt02} into \eqref{eq:volt2} and by keeping the constraints (\ref{eq:pccon_c2}) implicit. { To facilitate the design of online algorithms that exhibit linear convergence, we  consider the following regularized Lagrangian function:}
\vspace{-.0cm}
\begin{eqnarray}
\hL^t(\bm{z}^t,\bm{\mu}^t): =L^t(\bm{z}^t,\bm{\mu}^t) - \frac{\phi}{2}\|\bm{\mu}^t\|^2,  \label{eq:Lag_phi}
\end{eqnarray}
{where $\phi>0$ is a predefined parameter~\cite{koshal2011multiuser}. Notice that $\hL^t(\bm{z}^t,\bm{\mu}^t)$ is strongly convex in $\bm{z}^t$ and strongly concave in $\bm{\mu}^t$; therefore,  $\hL^t(\bm{z}^t,\bm{\mu}^t)$ has a unique saddle point. On the other had, the saddle point of $\hL^t(\bm{z}^t,\bm{\mu}^t)$ does not coincide with the primal-dual optimum of \eqref{eq:Lag}; it is, in fact, related to the concept of approximate Karush-Kuhn-Tucker point. Bounds on the distance between the saddle points of \eqref{eq:Lag} and \eqref{eq:Lag_phi} (as a function of $\phi$) can be found in, e.g.,~\cite{koshal2011multiuser}.}

Define the primal optimal solution at time $t$ as for a given dual $\bm{\mu}^t$: 
\begin{subequations}
\begin{eqnarray}
\bm{z}^{t*}&:=&\underset{\bm{z}^t}{\arg\min}~\hL^t(\bm{z}^t,\bm{\mu}^t),\\
&&\hspace{6mm}\mathrm{s.t.}~ ~~\text{(\ref{eq:pccon_c2})},
\end{eqnarray}
\end{subequations}
and consider the following 
dual problem
\begin{eqnarray} 
\underset{\bm{\mu}^t \geq 0}{\max}~~h^t(\bm{\mu}^t) \label{eq:dual}
\end{eqnarray}
where the dual function is defined as:
\begin{eqnarray}
h^t(\bm{\mu}^t)= \hL^t(\bm{z}^{t*},\bm{\mu}^t). 
\end{eqnarray}
We then have the following result. 

\begin{lemma}[Theorem~2.1 of \cite{necoara2014rate}]\label{lem:gradient}
Under Assumptions~\ref{as:Cinv}--\ref{ass:g}, the gradient of the dual function is given by $\nabla_{\bm{\mu}} h^t(\bm{\mu}^t)=\bm{g}^t(\bm{z}^{t*}){-\phi\bm{\mu}^t}$. Furthermore, the  gradient of the dual function is Lipschitz continuous with constant ${\sigma^2_g}/{\sigma_c}{+\phi}$, i.e., 
\begin{eqnarray}
\| \nabla_{\bm{\mu}} h^t(\bm{\mu})-\nabla_{\bm{\mu}} h^t(\tilde{\bm{\mu}})\|\leq  ({\sigma^2_g}/{\sigma_c}{+\phi}) \|\bm{\mu}-\tilde{\bm{\mu}}\| \label{eq:2ndlp}
\end{eqnarray}
for any feasible $\bm{\mu}$ and $\tilde{\bm{\mu}}$.
\end{lemma}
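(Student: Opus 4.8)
The plan is to read $h^t$ as the optimal value of a strongly convex parametric minimization and to derive both claims from Danskin's theorem together with a perturbation estimate on the minimizer. First I would verify that, for each fixed $\bm{\mu}^t$, the map $\bm{z}^t \mapsto \hL^t(\bm{z}^t,\bm{\mu}^t)$ is $\sigma_c$-strongly convex over the convex set $conv(\bm{\cZ}^t)$: by Assumption~\ref{as:Cinv} the cost $\sum_{i}C_i^t$ supplies $\sigma_c$-strong convexity, by Assumption~\ref{ass:g} the coupling term $\bm{\mu}^{t\top}\bm{g}^t(\bm{z}^t)$ is affine (hence convex) in $\bm{z}^t$, and the regularizer $-\tfrac{\phi}{2}\|\bm{\mu}^t\|^2$ does not depend on $\bm{z}^t$. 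This guarantees a unique minimizer, which I denote $\bm{z}^{t*}(\bm{\mu}^t)$, so Danskin's theorem applies and gives that $h^t$ is differentiable with $\nabla_{\bm{\mu}}h^t(\bm{\mu}^t)=\nabla_{\bm{\mu}}\hL^t(\bm{z}^t,\bm{\mu}^t)\big|_{\bm{z}^t=\bm{z}^{t*}}$. Since $\nabla_{\bm{\mu}}\hL^t(\bm{z}^t,\bm{\mu}^t)=\bm{g}^t(\bm{z}^t)-\phi\bm{\mu}^t$, evaluating at $\bm{z}^{t*}$ yields the first claim $\nabla_{\bm{\mu}}h^t(\bm{\mu}^t)=\bm{g}^t(\bm{z}^{t*})-\phi\bm{\mu}^t$.

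For the Lipschitz bound I would take two multipliers $\bm{\mu},\tilde{\bm{\mu}}$ and use the gradient formula to write $\nabla_{\bm{\mu}}h^t(\bm{\mu})-\nabla_{\bm{\mu}}h^t(\tilde{\bm{\mu}})=[\bm{g}^t(\bm{z}^{t*}(\bm{\mu}))-\bm{g}^t(\bm{z}^{t*}(\tilde{\bm{\mu}}))]-\phi(\bm{\mu}-\tilde{\bm{\mu}})$, then split the norm by the triangle inequality into a $\bm{g}^t$-difference plus $\phi\|\bm{\mu}-\tilde{\bm{\mu}}\|$. Because $\bm{g}^t$ is affine with Jacobian bounded by $\sigma_g$ in Frobenius norm, the first piece is at most $\sigma_g\|\bm{z}^{t*}(\bm{\mu})-\bm{z}^{t*}(\tilde{\bm{\mu}})\|$, so everything reduces to a perturbation bound on the minimizer map $\bm{\mu}\mapsto\bm{z}^{t*}(\bm{\mu})$.

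The crux, and the step I expect to be the main obstacle, is showing $\|\bm{z}^{t*}(\bm{\mu})-\bm{z}^{t*}(\tilde{\bm{\mu}})\|\leq(\sigma_g/\sigma_c)\|\bm{\mu}-\tilde{\bm{\mu}}\|$. I would obtain this from the first-order variational inequalities characterizing the two constrained minimizers over $conv(\bm{\cZ}^t)$: testing the optimality condition at $\bm{z}^{t*}(\bm{\mu})$ against $\bm{z}^{t*}(\tilde{\bm{\mu}})$ and vice versa, then adding the two inequalities. Writing $J$ for the constant Jacobian of $\bm{g}^t$, the affine coupling contributes $J^{\top}(\bm{\mu}-\tilde{\bm{\mu}})$, while the cost gradients contribute a monotone term lower-bounded by $\sigma_c\|\bm{z}^{t*}(\bm{\mu})-\bm{z}^{t*}(\tilde{\bm{\mu}})\|^2$ via strong convexity. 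Bounding the coupling term by Cauchy--Schwarz, using $\|J^{\top}(\bm{\mu}-\tilde{\bm{\mu}})\|\leq\sigma_g\|\bm{\mu}-\tilde{\bm{\mu}}\|$, and cancelling one factor of $\|\bm{z}^{t*}(\bm{\mu})-\bm{z}^{t*}(\tilde{\bm{\mu}})\|$ produces the Lipschitz constant $\sigma_g/\sigma_c$ for the minimizer map.

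Chaining the three estimates then gives $\|\nabla_{\bm{\mu}}h^t(\bm{\mu})-\nabla_{\bm{\mu}}h^t(\tilde{\bm{\mu}})\|\leq\sigma_g\cdot(\sigma_g/\sigma_c)\|\bm{\mu}-\tilde{\bm{\mu}}\|+\phi\|\bm{\mu}-\tilde{\bm{\mu}}\|=(\sigma_g^2/\sigma_c+\phi)\|\bm{\mu}-\tilde{\bm{\mu}}\|$, which is the advertised constant. The points requiring care are that the variational-inequality argument remains valid over the convex constraint set $conv(\bm{\cZ}^t)$ regardless of boundary nonsmoothness, and that the single bound $\sigma_g$ is applied consistently both to the operator-norm action of $J$ in the perturbation step and to the chain-rule estimate for the $\bm{g}^t$-difference.
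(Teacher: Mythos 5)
Your proposal is correct, but it is worth noting that the paper does not actually prove this lemma: it is invoked verbatim as Theorem~2.1 of \cite{necoara2014rate}, so your blind attempt effectively reconstructs the standard proof behind the citation rather than paralleling an in-paper argument. Each step checks out. The map $\bm{z}^t \mapsto \hL^t(\bm{z}^t,\bm{\mu}^t)$ is indeed $\sigma_c$-strongly convex over the compact convex set $conv(\bm{\cZ}^t)$ (Assumption~\ref{as:Cinv} gives blockwise strong convexity of the separable cost, Assumption~\ref{ass:g} makes the coupling term affine in $\bm{z}^t$, and the regularizer is constant in $\bm{z}^t$), so the minimizer is unique and Danskin's theorem yields differentiability of $h^t$ with $\nabla_{\bm{\mu}}h^t(\bm{\mu}^t)=\bm{g}^t(\bm{z}^{t*}(\bm{\mu}^t))-\phi\bm{\mu}^t$; compactness of $conv(\bm{\cZ}^t)$ is exactly what licenses this. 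Your variational-inequality argument for the perturbation bound is the crux and is sound: testing each optimality condition against the other minimizer, adding, and invoking $\sigma_c$-strong monotonicity of the cost gradient gives $\sigma_c\|\bm{z}^{t*}(\bm{\mu})-\bm{z}^{t*}(\tilde{\bm{\mu}})\|^2 \leq \|J^{\top}(\bm{\mu}-\tilde{\bm{\mu}})\|\,\|\bm{z}^{t*}(\bm{\mu})-\bm{z}^{t*}(\tilde{\bm{\mu}})\|$, and since the spectral norm is dominated by the Frobenius norm, the single constant $\sigma_g$ from the paper's bound $\|\nabla_{\bm{z}}\bm{g}^t\|_F\leq\sigma_g$ legitimately serves in both places you use it. Chaining with the triangle inequality for the $-\phi\bm{\mu}$ term then gives precisely the constant $\sigma_g^2/\sigma_c+\phi$ in \eqref{eq:2ndlp}. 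What your route buys over the paper's is self-containment: the reader need not consult \cite{necoara2014rate}, and your derivation makes explicit where the $\phi$-dependence enters (additively, from the regularizer alone, not through the minimizer map), which is the structural fact the paper later exploits in Lemma~\ref{lem:sigmarelation} and Theorem~\ref{the:converge}.
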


Using the results of Theorem~2.1 of \cite{necoara2014rate}, we augment a dual gradient method for solving (\ref{eq:dual}) with the a randomization strategy to recover discrete power commands at each iteration. The resultant stochastic dual gradient algorithm involves a sequential execution of the following steps:  
\begin{subequations}\label{eq:dualalg}
\begin{eqnarray}
&&\hspace{-7 mm} \textrm{[S1]} \, \bm{z}^{t*}(k+1)=\underset{\bm{z}^t}{\arg\min}~\hL^t(\bm{z}^t,\bm{\mu}^t(k)),~\mathrm{s.t.}~ \text{(\ref{eq:pccon_c2})},\label{eq:random}\\
&&\hspace{-7 mm} \textrm{[S2]} ~\text{For}~d\in\cD_{S_i}, \text{~pick}~{z}_{i,d}^{t}(k+1)\in \cZ^t_{i,d}~\text{randomly based on} \nonumber\\
&&z_{i,d}^{t*}(k+1)~\text{using scheme described in Section II.C.1b}, \nonumber\\
&&\hspace{-7 mm} \textrm{[S3]} \text{~Set}~{z}_{i,d}^{\tau}(k+1)={z}_{i,d}^{\tau*}(k+1),\forall d\in\cD_{S_i},\ \forall \tau\neq t,\nonumber\\
&&\hspace{-0mm}\text{~and}~\bm{z}_{i,d}^{t}(k+1)=\bm{z}_{i,d}^{t*}(k+1),\forall d\in\cD_{F_i},\label{eq:dualrandom}\\
&&\hspace{-7 mm} \textrm{[S4]} \, \bm{\mu}^{t}(k+1)=\big[\bm{\mu}^{t}(k)+\varepsilon \big(\bm{g}^t(\bm{z}^{t}(k+1)){-\phi\bm{\mu}^t(k)}\big)\big]_+~, \label{eq:dualalg3}
\end{eqnarray}
\end{subequations}
where $[~]_+$ denotes the projection onto the nonnegative orthant, and $\varepsilon>0$ is a given stepsize. 
Notice that \eqref{eq:dualrandom} recovers feasible power commands only for the current time $t$ (and not for $\tau = t+1, \ldots, t+w$). Further, in step~(\ref{eq:dualalg3}), $\bm{g}^t(\bm{z}^{t}(k+1)){-\phi\bm{\mu}^t(k)}$ is stochastic gradient of $h^t(\bm{\mu}^t(k))$ with $E[\nabla_{\bm{\mu}} h^t(\bm{\mu}^t(k))]=\bm{g}^t(\bm{z}^{t*}(k+1)){-\phi\bm{\mu}^t(k)}$ (the gradient).  

Let $\sigma_h>0$ be the strong concavity coefficient of $h^t(\bm{\mu}^t)$; that is, for any feasible $\bm{\mu}$ and $\tilde{\bm{\mu}}$, it holds that:
\begin{eqnarray}
\big(\nabla_{\bm{\mu}} h^t(\bm{\mu})-\nabla_{\bm{\mu}} h^t(\tilde{\bm{\mu}})\big)^{\top}(\bm{\mu}-\tilde{\bm{\mu}}) \leq -\sigma_h \|\bm{\mu}-\tilde{\bm{\mu}}\|^2.\label{eq:2ndconcave}
\end{eqnarray}

{
\begin{lemma}\label{lem:sigmarelation}
	The following relationship holds for the strong concavity parameter $\sigma_h$ and the Lipschitz continuity parameter $\sigma_g^2/\sigma_c{+\phi}$:
	\begin{eqnarray}
	\sigma_h \leq \sigma_g^2/\sigma_c{+\phi}.\label{eq:sigmarelation}
	\end{eqnarray}
\end{lemma}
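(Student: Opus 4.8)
The plan is to derive the claimed inequality directly from the two structural properties of the dual function $h^t$ that are already in hand: the strong concavity modulus $\sigma_h$ defined through \eqref{eq:2ndconcave}, and the Lipschitz continuity of the dual gradient with constant $\sigma_g^2/\sigma_c + \phi$ established in Lemma~\ref{lem:gradient} via \eqref{eq:2ndlp}. The link between the two is the Cauchy--Schwarz inequality. This is simply the standard fact that for any continuously differentiable function the strong concavity (or convexity) modulus can never exceed the Lipschitz constant of its gradient.

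First I would fix two arbitrary distinct feasible dual points $\bm{\mu}$ and $\tilde{\bm{\mu}}$ and invoke the strong concavity inequality \eqref{eq:2ndconcave}, which gives $\big(\nabla_{\bm{\mu}} h^t(\bm{\mu})-\nabla_{\bm{\mu}} h^t(\tilde{\bm{\mu}})\big)^{\top}(\bm{\mu}-\tilde{\bm{\mu}}) \leq -\sigma_h \|\bm{\mu}-\tilde{\bm{\mu}}\|^2$. Since the right-hand side is negative, I would negate both sides to obtain a lower bound on the magnitude of the inner product, namely $\big|\big(\nabla_{\bm{\mu}} h^t(\bm{\mu})-\nabla_{\bm{\mu}} h^t(\tilde{\bm{\mu}})\big)^{\top}(\bm{\mu}-\tilde{\bm{\mu}})\big| \geq \sigma_h \|\bm{\mu}-\tilde{\bm{\mu}}\|^2$.

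Next I would bound the same inner product from above. By Cauchy--Schwarz, its magnitude is at most $\|\nabla_{\bm{\mu}} h^t(\bm{\mu})-\nabla_{\bm{\mu}} h^t(\tilde{\bm{\mu}})\| \cdot \|\bm{\mu}-\tilde{\bm{\mu}}\|$, and applying the Lipschitz bound \eqref{eq:2ndlp} replaces the gradient-difference norm by $(\sigma_g^2/\sigma_c + \phi)\|\bm{\mu}-\tilde{\bm{\mu}}\|$. Chaining the lower and upper estimates yields $\sigma_h \|\bm{\mu}-\tilde{\bm{\mu}}\|^2 \leq (\sigma_g^2/\sigma_c + \phi)\|\bm{\mu}-\tilde{\bm{\mu}}\|^2$, and dividing by the strictly positive quantity $\|\bm{\mu}-\tilde{\bm{\mu}}\|^2$ gives $\sigma_h \leq \sigma_g^2/\sigma_c + \phi$, which is exactly \eqref{eq:sigmarelation}.

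There is no genuine obstacle here; the argument is a short and routine manipulation. The only point that requires attention is the sign bookkeeping: because $h^t$ is concave, the inner product appearing in \eqref{eq:2ndconcave} is nonpositive, so one must pass to absolute values (equivalently, negate the inequality) before applying Cauchy--Schwarz, which only controls the magnitude. Choosing $\bm{\mu}\neq\tilde{\bm{\mu}}$ is what legitimizes the final division, after which the bound holds independently of the chosen points.
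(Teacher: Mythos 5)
Your proof is correct and follows essentially the same route as the paper: negate the strong concavity inequality \eqref{eq:2ndconcave}, bound the resulting inner product via Cauchy--Schwarz together with the Lipschitz estimate \eqref{eq:2ndlp}, and divide by $\|\bm{\mu}-\tilde{\bm{\mu}}\|^2$. Your added remarks on the sign bookkeeping and on choosing $\bm{\mu}\neq\tilde{\bm{\mu}}$ before dividing are fine points the paper leaves implicit, but the argument is identical in substance.
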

\begin{proof}
	From \eqref{eq:2ndconcave} we have
	\begin{eqnarray}
	\big(-\nabla_{\bm{\mu}} h^t(\bm{\mu})+\nabla_{\bm{\mu}} h^t(\tilde{\bm{\mu}})\big)^{\top}(\bm{\mu}-\tilde{\bm{\mu}}) \geq \sigma_h \|\bm{\mu}-\tilde{\bm{\mu}}\|^2.\nonumber
	\end{eqnarray}
	On the other hand, the following holds:
	\begin{eqnarray}
	&&\big(-\nabla_{\bm{\mu}} h^t(\bm{\mu})+\nabla_{\bm{\mu}} h^t(\tilde{\bm{\mu}})\big)^{\top}(\bm{\mu}-\tilde{\bm{\mu}})\nonumber\\
	&\leq&\big\|-\nabla_{\bm{\mu}} h^t(\bm{\mu})+\nabla_{\bm{\mu}} h^t(\tilde{\bm{\mu}})\big\|\cdot\|\bm{\mu}-\tilde{\bm{\mu}}\|\nonumber\\
	&\leq&({\sigma^2_g}/{\sigma_c}{+\phi}) \|\bm{\mu}-\tilde{\bm{\mu}}\| \cdot\|\bm{\mu}-\tilde{\bm{\mu}}\|\nonumber
	\end{eqnarray}
	with the second inequality from \eqref{eq:2ndlp}. This leads to \eqref{eq:sigmarelation}.
\end{proof}
}

\subsection{Convergence Analysis}	

%
%
%
%
%
	
	Let $\Delta > 0$ be a given scalar such that the following holds uniformly in time: 
\begin{eqnarray}
\text{Var}\big(\bm{g}^t(\bm{z}^t)\big):=E[\|\bm{g}^t(\bm{z}^t)\|^2]-\|\bm{g}^t(\bm{z}^{t*})\|^2\leq \Delta, \label{eq:Delta}
\end{eqnarray}
%
where the variance is taken with respect to the randomization step~\eqref{eq:dualrandom}. In other words, $\text{Var}\big(\bm{g}^t(\bm{z}^t)\big)$ represent the variance of the discrepancy between the constraint function evaluated at the relaxed solution and at the randomized solution.  
{
Notice that, given a system with a finite number of devices with finite power commands, a finite $\Delta$ always exists.  See Appendix~\ref{app:Delta} for an illustrative example of characterizing $\Delta$ under the probability distribution \eqref{eq:probability}. }
 
The following convergence result for the dual variables can be stated.

\begin{theorem}\label{the:converge}
Under Assumptions~\ref{as:Cinv}--\ref{ass:sla}, and with a  stepsize $\varepsilon$ chosen such that
{\begin{eqnarray}
\varepsilon<2\sigma_h{/(\sigma^2_g/\sigma_c+\phi)^2},\label{eq:stepconverge}
\end{eqnarray}}
the stochastic dual algorithm~\eqref{eq:dualalg}
converges as 
\begin{eqnarray}
\lim_{k\rightarrow\infty}E[\|\bm{\mu}^t(k)-\bm{\mu}^{t*}\|^2]=\frac{\varepsilon\Delta}{2\sigma_h-\varepsilon{(\sigma^2_g/\sigma_c+\phi)^2}}.\label{eq:convergeresult}
\end{eqnarray}
\end{theorem}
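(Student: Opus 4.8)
The plan is to read \eqref{eq:dualalg3} as a projected stochastic gradient ascent on the strongly concave dual $h^t$ and to derive a one-step contraction for the mean-square error $E[\|\bm{\mu}^t(k)-\bm{\mu}^{t*}\|^2]$. First I would record the two structural facts that drive everything: (i) the optimal dual point satisfies the fixed-point identity $\bm{\mu}^{t*}=[\bm{\mu}^{t*}+\varepsilon\nabla_{\bm{\mu}}h^t(\bm{\mu}^{t*})]_+$, since it maximizes $h^t$ over the nonnegative orthant; and (ii) projection onto the nonnegative orthant is nonexpansive. Writing $\hat{\bm{g}}(k):=\bm{g}^t(\bm{z}^t(k+1))-\phi\bm{\mu}^t(k)$ for the stochastic gradient and $e(k):=\bm{\mu}^t(k)-\bm{\mu}^{t*}$, nonexpansiveness and the fixed-point identity give
\[
\|e(k+1)\|^2 \le \|e(k)+\varepsilon\big(\hat{\bm{g}}(k)-\nabla_{\bm{\mu}}h^t(\bm{\mu}^{t*})\big)\|^2,
\]
which I would expand and then take the conditional expectation $E[\,\cdot\,|\,\cF_k]$ with respect to the randomization \eqref{eq:dualrandom} performed at step $k$.

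The crux is controlling the two resulting terms. For the cross/inner-product term I would use that the rounding is unbiased, so $E[\hat{\bm{g}}(k)\,|\,\cF_k]=\bm{g}^t(\bm{z}^{t*}(k+1))-\phi\bm{\mu}^t(k)=\nabla_{\bm{\mu}}h^t(\bm{\mu}^t(k))$ by Lemma~\ref{lem:gradient}, and then invoke strong concavity \eqref{eq:2ndconcave} to obtain $e(k)^\top\big(\nabla_{\bm{\mu}}h^t(\bm{\mu}^t(k))-\nabla_{\bm{\mu}}h^t(\bm{\mu}^{t*})\big)\le -\sigma_h\|e(k)\|^2$. For the squared-gradient term the key is the orthogonal split of $\hat{\bm{g}}(k)-\nabla_{\bm{\mu}}h^t(\bm{\mu}^{t*})$ into the mean-zero noise $\bm{g}^t(\bm{z}^t(k+1))-\bm{g}^t(\bm{z}^{t*}(k+1))$ plus the deterministic difference $\nabla_{\bm{\mu}}h^t(\bm{\mu}^t(k))-\nabla_{\bm{\mu}}h^t(\bm{\mu}^{t*})$; the cross term vanishes in conditional expectation, the noise piece is bounded by $\Delta$ once we observe that affineness of $\bm{g}^t$ (Assumption~\ref{ass:g}) together with the unbiased rounding yields $E[\bm{g}^t(\bm{z}^t)]=\bm{g}^t(\bm{z}^{t*})$, making \eqref{eq:Delta} exactly the noise second moment, and the deterministic piece is bounded by $(\sigma_g^2/\sigma_c+\phi)^2\|e(k)\|^2$ via the Lipschitz estimate \eqref{eq:2ndlp}. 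Assembling these pieces gives
\[
E[\|e(k+1)\|^2\,|\,\cF_k] \le \rho\,\|e(k)\|^2 + \varepsilon^2\Delta,
\]
with $\rho:=1-2\varepsilon\sigma_h+\varepsilon^2(\sigma_g^2/\sigma_c+\phi)^2$.

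I would then take total expectations and verify that $\rho\in[0,1)$: the stepsize bound \eqref{eq:stepconverge} gives $\varepsilon(\sigma_g^2/\sigma_c+\phi)^2<2\sigma_h$, hence $\rho<1$, while Lemma~\ref{lem:sigmarelation} ($\sigma_h\le\sigma_g^2/\sigma_c+\phi$) makes the discriminant nonnegative so $\rho\ge0$. Unrolling the geometric recursion produces $E[\|e(k)\|^2]\le\rho^k E[\|e(0)\|^2]+\varepsilon^2\Delta\,\frac{1-\rho^k}{1-\rho}$, and letting $k\to\infty$ with $1-\rho=\varepsilon\big(2\sigma_h-\varepsilon(\sigma_g^2/\sigma_c+\phi)^2\big)$ yields the limit in \eqref{eq:convergeresult}. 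The main obstacle I anticipate is the bias/variance bookkeeping in the squared-gradient term: keeping the orthogonal decomposition clean so the Lipschitz contribution and the $\Delta$-noise are not double-counted, and justifying $E[\bm{g}^t(\bm{z}^t)]=\bm{g}^t(\bm{z}^{t*})$ so that \eqref{eq:Delta} applies verbatim. A secondary subtlety is that the argument delivers an upper bound on $\limsup_k E[\|e(k)\|^2]$, so the equality asserted in \eqref{eq:convergeresult} is to be understood as the limiting value of this tight recursion.
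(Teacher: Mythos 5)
Your proposal is correct and follows essentially the same route as the paper's proof: nonexpansiveness of the projection together with the fixed-point identity at $\bm{\mu}^{t*}$, unbiasedness of the randomization combined with Lemma~\ref{lem:gradient} and strong concavity \eqref{eq:2ndconcave} for the cross term, the variance-plus-Lipschitz decomposition of the squared term using \eqref{eq:Delta} and \eqref{eq:2ndlp}, and the geometric recursion with contraction factor $1-2\varepsilon\sigma_h+\varepsilon^2(\sigma_g^2/\sigma_c+\phi)^2$, whose nonnegativity is ensured by Lemma~\ref{lem:sigmarelation} exactly as in the paper. Your closing caveat is also apt: the paper's argument, like yours, only establishes an upper bound on $\limsup_k E[\|\bm{\mu}^t(k)-\bm{\mu}^{t*}\|^2]$, so the equality in \eqref{eq:convergeresult} is properly read as the limiting value of that bound.
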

 
%
\begin{proof}
Let $\bm{\mu}^{t*}$ be the vector of the unique optimal dual variables. Then,
\begin{eqnarray}
&&\hspace{-3mm}E\big[\|\bm{\mu}^t(k+1)-\bm{\mu}^{t*}\|^2\big|\bm{\mu}^t(k)\big]\nonumber\\
&\hspace{-3mm}\leq&\hspace{-3mm}E\big[\|\bm{\mu}^t(k)+\varepsilon \big(\bm{g}^t(\bm{z}^{t}(k+1)){-\phi\bm{\mu}^t(k)}\big)\nonumber\\
&\hspace{-3mm}&-\bm{\mu}^{t*}-\varepsilon \big(\bm{g}^t(\bm{z}^{t*}){-\phi\bm{\mu}^{t*}}\big)\|^2\big|\bm{\mu}^t(k)\big]\nonumber\\
&\hspace{-3mm}=&\hspace{-3mm}  \|\bm{\mu}^t(k)-\bm{\mu}^{t*}\|^2 + \varepsilon^2E\big[\|\bm{g}^t(\bm{z}^t(k+1)){-\phi\bm{\mu}^t(k)}-\bm{g}^t(\bm{z}^{t*})\nonumber\\
&&{+\phi\bm{\mu}^{t*}}\|^2\big|\bm{\mu}^t(k)\big] +2\varepsilon (\bm{\mu}^t(k)-\bm{\mu}^{t*})^{\top}(\bm{g}^t(\bm{z}^{t*}(k+1))\nonumber\\
&&{-\phi\bm{\mu}^t(k)}-\bm{g}(\bm{z}^{t*}){+\phi\bm{\mu}^{t*}})\nonumber\\
&\hspace{-3mm}=&\hspace{-3mm}  \|\bm{\mu}^t(k)-\bm{\mu}^{t*}\|^2 + \varepsilon^2E\big[\|\bm{g}^t(\bm{z}^t(k+1)){-\phi\bm{\mu}^t(k)}-\bm{g}^t(\bm{z}^{t*})\nonumber\\
&&{+\phi\bm{\mu}^{t*}}\|^2\big|\bm{\mu}^t(k)\big] +2\varepsilon (\bm{\mu}^t(k)-\bm{\mu}^{t*})^{\top}(\nabla h^t(\bm{\mu}^t(k))\nonumber\\
&&-\nabla h^t(\bm{\mu}^{t*}))\nonumber
\end{eqnarray}
\begin{eqnarray}
&\hspace{-3mm}\leq&\hspace{-3mm}  \|\bm{\mu}^t(k)-\bm{\mu}^{t*}\|^2 + \varepsilon^2E\big[\|\bm{g}^t(\bm{z}^t(k+1)){-\phi\bm{\mu}^t(k)}-\bm{g}^t(\bm{z}^{t*})\nonumber\\
&&{+\phi\bm{\mu}^{t*}}\|^2\big|\bm{\mu}^t(k)\big]-2\varepsilon\sigma_h\|\bm{\mu}^t(k)-\bm{\mu}^{t*}\|^2,\nonumber
\end{eqnarray}
where the first inequality follows from the non-expansiveness property of the projection operator; the first equality is due to the fact that $\bm{g}^t$ is linear and it accounts for the recovery of discrete solutions; the second equality follows from Lemma~\ref{lem:gradient}; and the last inequality is due to the strong concavity of $h^t(\bm{\mu}^t)$.
Notice that
\begin{eqnarray}
&\hspace{-3mm}&\hspace{-1mm}E\big[\|\bm{g}^t(\bm{z}^t(k+1)){-\phi\bm{\mu}^t(k)}-\bm{g}^t(\bm{z}^{t*}){+\phi\bm{\mu}^{t*}}\|^2\big|\bm{\mu}^t(k)\big]\nonumber\\
&\hspace{-3mm}= &\hspace{-1mm} Var\big(\bm{g}^t(\bm{z}^t(k+1))\big|\bm{\mu}^t(k)\big)\nonumber\\
&&+\|\bm{g}^t(\bm{z}^{t*}(k+1)){-\phi\bm{\mu}^t(k)}-\bm{g}^t(\bm{z}^{t*}){+\phi\bm{\mu}^{t*}}\|^2\nonumber\\
&\hspace{-3mm}\leq&\hspace{-1mm}\Delta+\|\nabla h^t(\bm{\mu}^t(k))-\nabla h^t(\bm{\mu}^{t*})\|^2\nonumber\\
&\hspace{-3mm}\leq&\hspace{-1mm}\Delta+{(\sigma^2_g/\sigma_c+\phi)^2} \|\bm{\mu}^t(k)-\bm{\mu}^{t*}\|^2,\nonumber
\end{eqnarray}
where the last inequality is due to the Lipschitz continuity of $\nabla h^t(\bm{\mu}^{t})$. We then obtain the following inequality:
\begin{eqnarray}
&&E\big[\|\bm{\mu}^t(k+1)-\bm{\mu}^{t*}\|^2\big|\bm{\mu}^{t}(k)\big]\nonumber\\
&\leq &(1+\varepsilon^2{(\sigma^2_g/\sigma_c+\phi)^2}-2\varepsilon\sigma_h)\|\bm{\mu}^t(k)-\bm{\mu}^{t*}\|^2+\varepsilon^2\Delta \, .\nonumber
\end{eqnarray}
By taking the total expectation on both sides and by recursively computing  the  steps above, we obtain:
\begin{eqnarray}
&\hspace{-3mm}& \hspace{-1mm}E\big[\|\bm{\mu}^t(k+1)-\bm{\mu}^{t*}\|^2\big]\nonumber\\
&\hspace{-3mm}\leq&\hspace{-1mm} (1+\varepsilon^2{(\sigma^2_g/\sigma_c+\phi)^2}-2\varepsilon\sigma_h)^kE\big[\|\bm{\mu}^t(1)-\bm{\mu}^{t*}\|^2\big]\nonumber\\
&\hspace{-3mm}&\hspace{2mm}+\varepsilon^2\Delta\frac{1-(1+\varepsilon^2{(\sigma^2_g/\sigma_c+\phi)^2}-2\varepsilon\sigma_h)^k}{2\varepsilon\sigma_h-\varepsilon^2{(\sigma^2_g/\sigma_c+\phi)^2}}.\nonumber
\end{eqnarray}
{ Note that \eqref{eq:sigmarelation} always guarantees that the term $(1+\varepsilon^2{(\sigma^2_g/\sigma_c+\phi)^2}-2\varepsilon\sigma_h)$ is nonnegative.} Then with $\varepsilon$ chosen as in \eqref{eq:stepconverge}, the result \eqref{eq:convergeresult} follows.
\end{proof}

Notice that \eqref{eq:stepconverge} allows to select a stepsize $\epsilon>0$ that is ``small enough'' to satisfy the converge requirement. {Meanwhile, the value of \eqref{eq:convergeresult} is decreasing in $\epsilon$, approaching zero if $\epsilon\rightarrow 0$. Therefore, we can choose appropriate $\epsilon$ to satisfy certain desired accuracy.}
It is also worth pointing out  that when $\Delta=0$ (i.e., no devices with discrete power commands are present),  the right-hand-side of \eqref{eq:convergeresult} goes to zero, corresponding to (\ref{eq:dualalg}) reducing to the standard dual gradient algorithm when no discrete variables are present.

\begin{corollary}
When $\cD_{S_i} = \emptyset$ for all $i\in\cN$ (i.e., there are no discrete optimization variables),  and under Assumptions~\ref{as:Cinv}--\ref{ass:sla}, if the  stepsize $\varepsilon$ satisfies  (\ref{eq:stepconverge}), then~(\ref{eq:dualalg}) converges to the exact {saddle point of \eqref{eq:Lag_phi}} asymptotically; i.e.,
\begin{eqnarray}
& \lim_{k\rightarrow\infty}\|\bm{\mu}^t(k)-\bm{\mu}^{t*}\|^2=0 \label{eq:convergeresult2} \\
& \lim_{k\rightarrow\infty}\|\bm{z}^t(k)-\bm{z}^{t*}\|^2=0 \, .
\end{eqnarray}
\end{corollary}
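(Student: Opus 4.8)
The plan is to exploit the fact that, when $\cD_{S_i}=\emptyset$ for all $i\in\cN$, the randomization steps [S1]--[S3] collapse: every device lies in some $\cD_{F_i}$, so step [S1] already returns a feasible point and step [S3] simply sets $\bm{z}^t(k+1)=\bm{z}^{t*}(k+1)$. Consequently the iterates are deterministic, and the constraint function evaluated at the recovered solution coincides with that evaluated at the relaxed solution, so the variance bound \eqref{eq:Delta} holds with $\Delta=0$. The dual claim \eqref{eq:convergeresult2} then follows immediately by specializing Theorem~\ref{the:converge} to $\Delta=0$: the right-hand side of \eqref{eq:convergeresult} vanishes, giving $\lim_k E[\|\bm{\mu}^t(k)-\bm{\mu}^{t*}\|^2]=0$, and since the iterates are deterministic the expectation is superfluous. (Inspecting the recursion in the proof of Theorem~\ref{the:converge} with $\Delta=0$ in fact shows geometric convergence with rate $1+\varepsilon^2(\sigma_g^2/\sigma_c+\phi)^2-2\varepsilon\sigma_h\in[0,1)$, where the upper bound is ensured by \eqref{eq:stepconverge} and the nonnegativity by Lemma~\ref{lem:sigmarelation}.)

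The substantive part is the primal claim. Here I would prove that the primal-minimizer map $\bm{\mu}\mapsto \bm{z}^{t*}(\bm{\mu}):=\arg\min_{\bm{z}\in conv(\bm{\cZ}^t)}\hL^t(\bm{z},\bm{\mu})$ is Lipschitz continuous with constant $\sigma_g/\sigma_c$. Writing the first-order optimality (variational) inequalities at two dual points $\bm{\mu}$ and $\tilde{\bm{\mu}}$, testing each against the other's minimizer, and adding them yields
\begin{eqnarray}
\langle \nabla_{\bm{z}}\hL^t(\bm{z}^*,\bm{\mu})-\nabla_{\bm{z}}\hL^t(\tilde{\bm{z}}^*,\tilde{\bm{\mu}}),\,\tilde{\bm{z}}^*-\bm{z}^*\rangle\geq 0,\nonumber
\end{eqnarray}
where $\bm{z}^*=\bm{z}^{t*}(\bm{\mu})$ and $\tilde{\bm{z}}^*=\bm{z}^{t*}(\tilde{\bm{\mu}})$. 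Because $\bm{g}^t$ is affine (Assumption~\ref{ass:g}), the dual-dependent part of the gradient contributes only the linear term $(\nabla_{\bm{z}}\bm{g}^t)^{\top}(\bm{\mu}-\tilde{\bm{\mu}})$; combining the strong convexity of the cost (Assumption~\ref{as:Cinv}), which lower-bounds the cost-gradient contribution by $\sigma_c\|\bm{z}^*-\tilde{\bm{z}}^*\|^2$, with the Jacobian bound $\|\nabla_{\bm{z}}\bm{g}^t\|_F\leq\sigma_g$ and the Cauchy--Schwarz inequality gives $\sigma_c\|\bm{z}^*-\tilde{\bm{z}}^*\|^2\leq \sigma_g\|\bm{\mu}-\tilde{\bm{\mu}}\|\,\|\bm{z}^*-\tilde{\bm{z}}^*\|$, hence $\|\bm{z}^*-\tilde{\bm{z}}^*\|\leq(\sigma_g/\sigma_c)\|\bm{\mu}-\tilde{\bm{\mu}}\|$.

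Finally I would combine the two pieces. Since $\bm{z}^t(k+1)=\bm{z}^{t*}(\bm{\mu}^t(k))$ and $\bm{z}^{t*}=\bm{z}^{t*}(\bm{\mu}^{t*})$, the Lipschitz estimate yields $\|\bm{z}^t(k+1)-\bm{z}^{t*}\|\leq(\sigma_g/\sigma_c)\|\bm{\mu}^t(k)-\bm{\mu}^{t*}\|$, and letting $k\to\infty$ together with \eqref{eq:convergeresult2} delivers the second limit. I expect the main obstacle to be the primal Lipschitz bound: it is the only genuinely new estimate, and care is needed to handle the \emph{constrained} minimization over $conv(\bm{\cZ}^t)$ via variational inequalities rather than unconstrained stationarity (the constraint set is active in general), and to verify that it is precisely the affine structure of $\bm{g}^t$ that makes the dual variable enter the gradient linearly, so that the cross term is controlled by $\|\bm{\mu}-\tilde{\bm{\mu}}\|$ alone.
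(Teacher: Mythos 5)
Your proposal is correct. For the dual limit you do exactly what the paper does: the paper offers no separate proof of the corollary, but states immediately after Theorem~\ref{the:converge} that with no discrete devices one may take $\Delta=0$ in \eqref{eq:Delta}, so the right-hand side of \eqref{eq:convergeresult} vanishes and \eqref{eq:dualalg} reduces to the standard (deterministic) dual gradient method; your observation that Lemma~\ref{lem:sigmarelation} makes the contraction factor $1+\varepsilon^2(\sigma_g^2/\sigma_c+\phi)^2-2\varepsilon\sigma_h$ nonnegative, while \eqref{eq:stepconverge} makes it strictly less than one, matches the closing step of the paper's proof of Theorem~\ref{the:converge} and correctly upgrades the conclusion to geometric convergence. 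Where you go beyond the paper's text is the primal limit: the paper asserts it without argument (it is implicit in the standard dual-gradient theory it cites, Theorem~2.1 of \cite{necoara2014rate}, which underlies Lemma~\ref{lem:gradient}), whereas you supply an explicit proof via the variational-inequality argument showing the minimizer map $\bm{\mu}\mapsto\arg\min_{\bm{z}\in conv(\bm{\cZ}^t)}\hL^t(\bm{z},\bm{\mu})$ is $(\sigma_g/\sigma_c)$-Lipschitz. That estimate is sound: the affine structure of $\bm{g}^t$ (Assumption~\ref{ass:g}) makes the dual enter the primal gradient linearly through the constant Jacobian, strong convexity (Assumption~\ref{as:Cinv}) gives the $\sigma_c\|\bm{z}^*-\tilde{\bm{z}}^*\|^2$ lower bound, and the Frobenius bound on the Jacobian controls the cross term, so the primal limit follows from the dual one by continuity. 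In short, your argument is a correct and self-contained version of the paper's reasoning, filling in a step the paper delegates to the cited literature.
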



{
\begin{remark}
Notice that the power setpoints from our stochastic dual algorithm may violate the network-level constraint \eqref{syscons} or  the device-level constraint \eqref{eq:statebound0}. In addressing the non-convexity brought in by the discrete devices, we trade possible violation of certain constraints for low computational complexity. In practice, small and/or temporary violation of certain constraints is expected and normal. Moreover, when there is large penetration of volatile renewable generation, it is more appropriate to have statistical guarantee that ensures that the probability of constraint violation is small enough. Although design for statistical performance guarantee is beyond the scope of this paper, for illustrative purpose in the Appendix we have characterized the bound on the variance of the network-level constraint function $g_t(z_t)$ due to the randomized decisions of discrete devices, as well as bounds on probability of voltage deviation, based on which we can choose tighter voltage bounds for the algorithm to ensure the probability of voltage violation with respect to original voltage bounds is below the specified value. The device-level constraints such as temperature constraints of  HVAC can be analyzed in similar way.  
\end{remark}
}

We next present a distributed implementation of the proposed method, along with a receding horizon optimization strategy.

\subsection{Distributed Implementation and Receding Horizon Optimization}\label{sec:distributed}

The stochastic dual algorithm~\eqref{eq:dualalg} can be implemented in a distributed fashion by leveraging the decomposability of the Lagrangian function. Specifically, step~\eqref{eq:random} is decomposable on a per-node basis, where each customer/node $i$ can update the power commands of the devices $\cD_i$ once the vectors  $(\bm{\alpha}^t,\bm{\beta}^t)$ are received. On the other hand, the dual step is performed by a network operator.  The overall distributed algorithm is tabulated as Algorithm~\ref{alg:distributed}.

Problem $(\bm{\mathcal{P}_3^{t}})$ is a multi-period problem. To optimize the operation of both network and devices,  problem $(\bm{\mathcal{P}_3^{t}})$ can be embedded into a receding horizon control (RHC) strategy~\cite{mattingley2011receding} where: 

\noindent (i) at time $t$, the temporal window of $w$ time slots $\cT_{k} := \{t_k, t_{k+1}, \ldots, t_{k+w}\}$ is considered, and the offline Algorithm~\ref{alg:distributed} is utilized to solve $(\bm{\mathcal{P}_3^{t}})$ to convergence; 

\noindent (ii) the solution $\{z_{i,d}^{t *}\}$ corresponding to the first slot $t$ is implemented;  

\noindent (iii) the temporal window is shifted of one time slot $\cT_k \rightarrow \cT_{k+1}$; and,

\noindent (iv) once the window is shifted, point (i) is repeated. 

This strategy is consistent with traditional RHC methods. However, the premises here are that:
 
\noindent $\bullet$ Each time slot is ``long enough'' to allow the offline Algorithm~\ref{alg:distributed} to converge to the solution of  $(\bm{\mathcal{P}_3^{t}})$;
 
\noindent $\bullet$ Within each time slot, the problem inputs are not changing; i.e., prevailing ambient and operational conditions are invariant over a  time slot. 

In the following section, we will present an \emph{online} algorithm that can cope with cases where prevailing ambient and operational conditions vary fast, and they lead to problem inputs that change even within an iteration (or a few iterations) of the Algorithm~\ref{alg:distributed}. The resultant algorithm can be interpreted as a real-time RHC strategy where the temporal window is shifted every iteration of the online algorithm (with the shift being determined by the computational time of the algorithmic steps).

\begin{remark}
Since the paper primarily focuses on the design and analysis of Algorithm~\ref{alg:distributed} and its online counterpart presented in Section~\ref{sec:online}, errors in the forecasting of problem inputs (such as  maximum available PV generation, uncontrollable loads, etc) are not considered in the paper. On the other hand, the proposed framework could be equally applicable to robust or chance-constrained counterparts of $(\bm{\mathcal{P}_3^{t}})$, so long as the  optimization problem is convex. 
\end{remark}

\begin{algorithm}
\caption{Offline Distributed Algorithm} \label{alg:distributed}
At time $t$,
\begin{algorithmic}
\REPEAT
\STATE[S1] Node $i$ measure the state $x^t_i$.
\STATE[S2] Given $(\bm{\alpha}_i^{t}(k),\bm{\beta}_i^{t}(k))$, node $i$ computes $\bm{z}^{t*}(k+1)$ by solving $(\bm{\mathcal{P}^{t}_{1,i}})$, and recovers feasible power set points ${z}^{t}(k+1)$ for $d \in \cD_{S_i}$ via randomization.
\STATE[S3] Node $i$ updates $\bm{p}^{t}_i(k+1)$ and $\bm{q}^{t}_i(k+1)$ based on \eqref{eq:aggregator} and sends the results to the network operator.
\STATE[S4] Network operator calculates system states for $\tau\in W_t^w$ as: 
	\begin{eqnarray}
		\hat{y}^{\tau}(k+1)&\hspace{-2mm}=&\hspace{-2mm} A{p}^{\tau}(k+1)+B{q}^{\tau}(k+1) + c.\label{eq:voltupdate}\nonumber
	\end{eqnarray}
\STATE[S5] Network operator updates dual variables as:
	\begin{eqnarray}
		\bm{\mu}^{t}(k+1)& \hspace{-2mm}=& \hspace{-2mm}\big[\bm{\mu}^{t}(k)+\varepsilon\big( \bm{g}^{t}(\hat{\bm{y}}^{t}(k+1)){-\phi\bm{\mu}^{t}(k)}\big)\big]_+.\nonumber
	\end{eqnarray}
\STATE[S6] Network operator computes the following quantities  for $\tau\in W_t^w$:
	\begin{eqnarray}
		\hspace{-4mm}\alpha^{\tau}(k+1)& \hspace{-2mm}=& \hspace{-2mm}-A^{\top}\nabla^{\top}_{\hat{y}} g^{\tau}(\hat{y}^{\tau}(k+1))\mu^{\tau}(k+1),\nonumber\\
		\hspace{-4mm}\beta^{\tau}(k+1)& \hspace{-2mm}=& \hspace{-2mm}-B^{\top}\nabla^{\top}_{\hat{y}} g^{\tau}(\hat{y}^{\tau}(k+1))\mu^{\tau}(k+1),\nonumber
	\end{eqnarray}
and sends them to the nodes.
\UNTIL meeting stopping criterion {(e.g., 	$\|\bm{\mu}^{t}(k+1)-\bm{\mu}^{t}(k)\|\leq \delta$ for some small $\delta>0$.)} 
\end{algorithmic}
\end{algorithm}

\section{Online and Asynchronous Optimization}\label{sec:online}

In this section, we address the case where computational and communication constraints   may render infeasible the distributed solution of  $(\bm{\mathcal{P}^{t}_3})$---and, hence,  $(\bm{\mathcal{P}^{t}_2})$---to convergence (i.e., offline solution) at a timescale that is consistent with the variability of the underlying ambient and operational conditions.  We consider a situation with: 

(i)~Real-time implementation: power setpoints are implemented whenever they are updated numerically.

(ii)~Asynchronous computation: power setpoints for devices in the set $\cD_{F_i}$ (with continuous decision variables) are updated every iteration of the algorithm (e.g., every second or a few seconds); on the other hand, the setpoints of device $\cD_{S_i}$ (with discrete decision variables) are updated at a slower time scale (depending on, e.g., the availability of the device). This leads to an \emph{asynchronous} control algorithm, where different devices are controlled at different timescales. 

(iii)~The temporal window of the multi-period problem is advanced at each iteration (or every few iterations, depending on particular implementations) of the algorithm. 


(iv)~The resultant online algorithm will leverage \emph{feedback} (see also~\cite{zhou2017incentive2,dallanese2016optimal}), to substitute the linearized model~\eqref{eq:linearization} with measurements of $\hat{y}^t$ for the first timeslot of the optimization window. This accounts for the nonlinear power flows and also helps reduce communication and computation overhead.

\begin{figure}
\centering
\includegraphics[trim={2cm 0 0 0},clip,width=.55\textwidth]{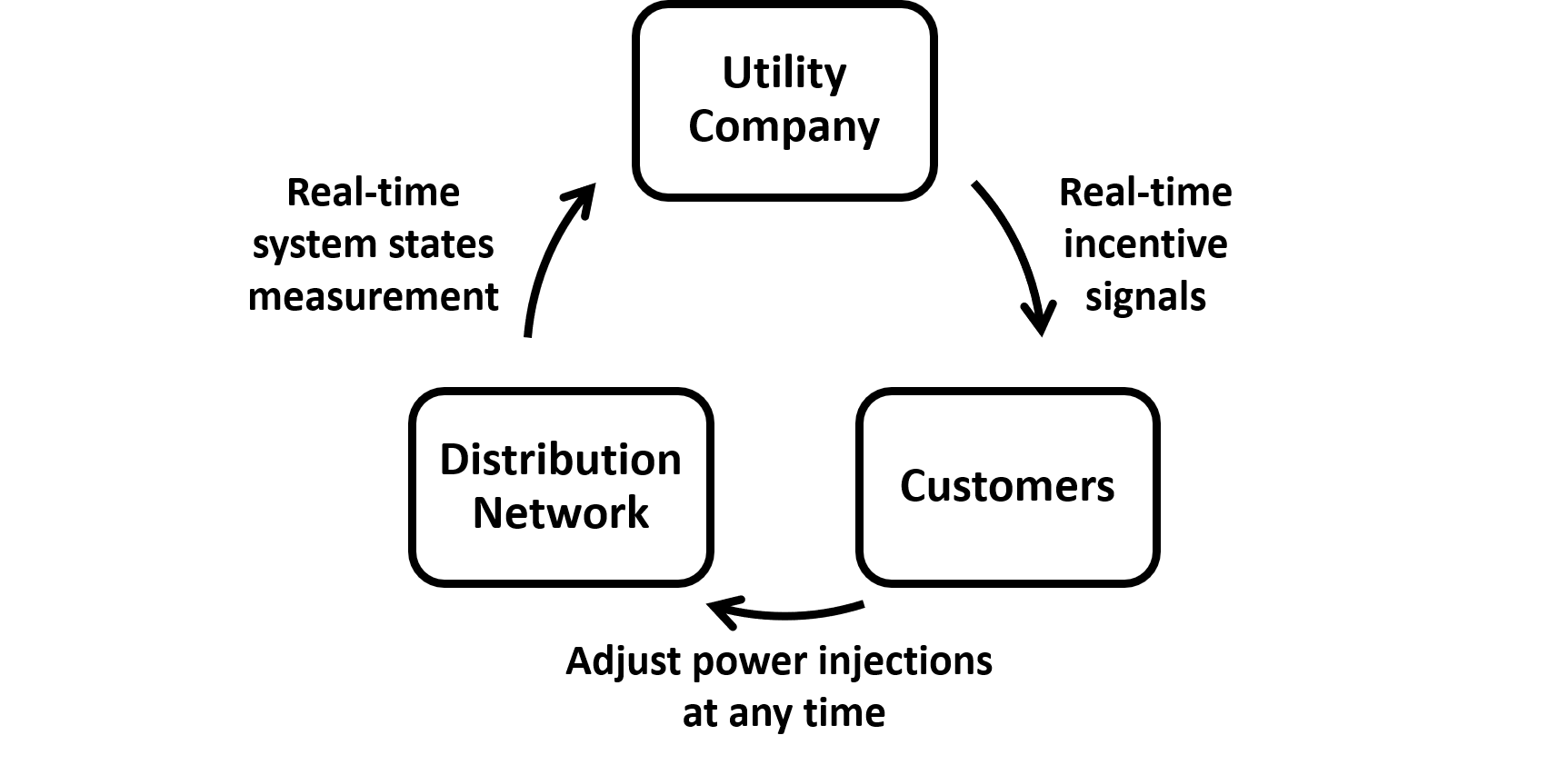}
\caption{Illustration of the online asynchronous algorithm.}
\label{F_topo}
\end{figure}

\subsection{Online Asynchronous Algorithm}

Let $\cT_{i,d}:=\{t^1_{i,d}, t^2_{i,d}, \ldots\}$ denote the set of time indexes where the device $d$ of node $i$ updates the local variables. Let $|\cT_{i,d}|$ denote the cardinality of  $\cT_{i,d}$.
%
Let $\cD_i^t \subset \cD_i$ be the subset of devices that update power setpoints at time $t$, and let $\cD^t=\bigcup_{i\in\cN}\cD^t_i$.
The quantities $\{\bm{z}^t_{i,d}\}_{d\not\in\cD^t}$ are treated as constants at time $t$; thus, define a ``reduced" Lagrangian as
\begin{eqnarray}
\cL_r^t\big(\{\bm{z}^t_{i,d}\}_{d\in\cD^t},\bm{\mu}_r^t\big):=\cL^t\big(\{\bm{z}^t_{i,d}\}_{d\in\cD^t},\bm{\mu}^t\big|\{\bm{z}^t_{i,d}\}_{d\not\in\cD^t}\big),
\end{eqnarray}
and define its corresponding ``reduced" dual function as:
\begin{eqnarray}
h_r^t(\bm{\mu}_r^t):=\min_{\{\bm{z}^t_{i,d}\}_{d\in\cD^t}} \cL_r^t\big(\{\bm{z}^t_{i,d}\}_{d\in\cD^t},\bm{\mu}_r^t\big),
\end{eqnarray}
whose unique  optimal solution is: 
\begin{eqnarray}
\bm{\mu}_r^{t*}:= \underset{\bm{\mu}_r^t\geq 0}{\arg\max} ~h_r^t(\bm{\mu}_r^t).
\end{eqnarray}
Based on the definitions above, 
we propose the online asynchronous algorithm tabulated as Algorithm~\ref{alg:ondistributed}. Notice that we have included an additional subscript to denote the time when decision variables are calculated; e.g., $\bm{\mu}^{t+1}_{r,t}$ is the dual vector for the reduced Lagrangian from $t+1$ to $t+w+1$ calculated at time $t$. We also recall  that in Algorithm~\ref{alg:ondistributed}, only \emph{one} iteration is carried out at time $t$, and then the temporal window is shifted from $t$ to $t + 1$.


\begin{algorithm}
\caption{Online Asynchronous  Algorithm} \label{alg:ondistributed}
\begin{algorithmic}
\STATE At time $t$,
\STATE[S1] Node $i$ updates measures $x_{i,t}^t$.
\STATE[S2] Given $(\bm{\alpha}_{i,t-1}^{t},\bm{\beta}_{i,t-1}^{t})$, Node $i$ sets $\bm{z}^t_{i,d,t}=\bm{z}^{t-1}_{i,d,t-1},\ \forall d\not\in\cD_i^t$, and solves $(\bm{\mathcal{P}^{t}_{1,i}})$ over $\{\bm{z}^t_{i,d,t}\}_{d\in\cD_i^t}$ to get the solution $\{\bm{z}^{t*}_{i,d,t}\}_{d\in\cD_i^t}$.   Recover and implement discrete power set points for $\cD_{S_i}$ via randomization, with $E[{z}_{i,d,t}^{t}]={z}_{i,d,t}^{t*}$.
\STATE[S3] Node $i$ calculates future aggregated power $(z^{t+1*}_{i,t},\ldots, z^{t+w*}_{i,t})$ based on \eqref{eq:aggregator}, and sends the result to network operator.
\STATE[S4] Network operator measures the current states $y_t^{t}$, and estimates the future states $\hat{y}_t^{\tau}$ for $\tau\in W_t^w$ by 
	\begin{eqnarray}
	\hat{y}_t^{\tau}&\hspace{-2mm}=&\hspace{-2mm} A{p}_t^{\tau*}+B{q}_t^{\tau*}+ c.\nonumber
	\end{eqnarray}
\STATE[S5] Network operator updates the dual variables for $t+1$ with \emph{measured} states by	
	\begin{eqnarray}
		\mu^{t+1}_{r,t}& \hspace{-2mm}=& \hspace{-2mm}\big[\mu^{t}_{r,t-1}+\varepsilon\big( g^{t}({y}_t^{t}){-\phi	\mu^{t}_{r,t-1}}\big)\big]_+.\nonumber
	\end{eqnarray}
and updates the dual variables for $\tau=t+1,\ldots,t+w$ with \emph{predicted} states by
	\begin{eqnarray}
		\mu^{\tau+1}_{r,t}& \hspace{-2mm}=& \hspace{-2mm}\big[\mu^{\tau}_{r,t-1}+\varepsilon\big( g^{\tau}(\hat{y}_t^{\tau}){-\phi	\mu^{\tau}_{r,t-1}}\big)\big]_+.\nonumber
	\end{eqnarray}
\STATE[S6] Network operator updates signals for $t+1$ by
	\begin{eqnarray}
		\alpha^{t+1}_t=-A^{\top}\nabla^{\top}_{\hat{y}} g^{t}({y}_t^{t})\mu^{t+1}_{r,t},\  \beta^{t+1}_t=-B^{\top}\nabla^{\top}_{\hat{y}} g^{t}({y}_t^{t})\mu^{t+1}_{r,t},\nonumber\hspace{-10mm}
	\end{eqnarray}		
and those for $\tau=t+1,\ldots,t+w$ by
	\begin{eqnarray}
		\alpha^{\tau+1}_t\!=\!-A^{\top}\nabla^{\top}_{\hat{y}} g^{\tau}(\hat{y}_t^{\tau})\mu^{\tau+1}_{r,t}\!,\  \beta^{\tau+1}_t\!=\!-B^{\top}\nabla^{\top}_{\hat{y}} g^{\tau}(\hat{y}_t^{\tau})\mu^{\tau+1}_{r,t}\!,\!\nonumber\hspace{-10mm}
	\end{eqnarray}						
and sends the results $(\bm{\alpha}_{i,t}^{t+1},\bm{\beta}_{i,t}^{t+1})$ to node $i$.
\STATE[S7] Shift temporal window from $t$ to $t + 1$, and go to [S1]. 
\end{algorithmic}
\end{algorithm}

\subsection{Performance Analysis}\label{sec:constant}

To analyze the performance of Algorithm~\ref{alg:ondistributed}, the following assumptions are made. 
\begin{assumption}
There exists some constant $e>0$ such that the variation of the optimal $\bm{\mu}_r^*$ of $(\bm{\mathcal{P}^{t}_3})$ over any two consecutive timeslots is bounded as
\begin{eqnarray}
 \|\bm{\mu}_r^{t+1*}-\bm{\mu}_r^{t*}\|^2 \leq e,\ \forall t.\label{eq:error}
\end{eqnarray}
\end{assumption}
This is a standard assumption in the domain of time-varying optimization~\cite{SimonettoGlobalsip2014, dallanese2016optimal} to characterize the variability of optimal solutions from the current timeslot to the next (and, hence, the variability of problem inputs).

Let $\hat{\bm{y}}^t:=[\hat{y}^{t\top},\hat{y}^{t+1\top},\ldots,\hat{y}^{t+w\top}]^{\top}$, and ${\bm{y}}^t:=[{y}^{t\top},\hat{y}^{t+1\top},\ldots,\hat{y}^{t+w\top}]^{\top}$, where the first element of the vector $\hat{\bm{y}}^t$ is  measured from the grid. We next assume bounded error due to the linearization of power flow equations.

\begin{assumption}\label{ass:rho}
Given any feasible power injection $\bm{z}^t$, there exists a constant $\rho>0$ such that
\begin{eqnarray}
\|\bm{g}^t(\bm{y}^t(\bm{z}^t))-\bm{g}^t(\hat{\bm{y}}^t(\bm{z}^t))\|^2 \leq \rho.\label{eq:rho}
\end{eqnarray}
\end{assumption}

%

{
\begin{remark}
Notice that $\bm{g}^t$ is a continuous function. Assumption~\ref{ass:rho}  is equivalent to assuming a bounded discrepancy  between the nonlinear power flow model and its linear approximation \eqref{eq:linearization}, which always holds given  bounded power injections $\bm{z}^t$  and the fact that both $\bm{y}^t$ and $\hat{\bm{y}}^t$ are continuous functions of $\bm{z}^t$ with bounded derivatives (see, e.g., \cite{zhou2016game}). 
	Moreover, as the linearized power flow model \eqref{eq:linearization} is an accurate
	approximation under normal operating condition, the bound $\rho$ is expected to be small; see, e.g., \cite{baran1989network,christakou2013efficient,linModels}.
\end{remark}
}

Then, the following convergence result can be stated. 

\begin{theorem}\label{the:onlineperform}
Under Assumptions~\ref{as:Cinv}--\ref{ass:rho}, and given a stepsize $\varepsilon$ chosen to satisfy \eqref{eq:stepconverge}, Algorithm~\ref{alg:ondistributed} converges as
\begin{eqnarray}
\limsup_{t\rightarrow\infty}E\big[\| \bm{\mu}_{r,t}^{t+1}-\bm{\mu}_r^{t+1*} \|^2\big]=\frac{\varepsilon^2\Delta+\varepsilon^2\rho+e}{2\varepsilon\sigma_h-\varepsilon^2 {(\sigma^2_g/\sigma_c+\phi)^2}}.\label{eq:onlineconverge}
\end{eqnarray}
\end{theorem}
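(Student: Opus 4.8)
The plan is to mirror the single-iteration analysis underlying Theorem~\ref{the:converge}, while accounting for the three features that distinguish the online setting: only one dual step is performed per timeslot, the gradient for the first slot is formed from the \emph{measured} (nonlinear) state rather than the linearized surrogate, and the optimizer $\bm{\mu}_r^{t*}$ drifts as the horizon advances. I would track the sequence $\nu_t:=\bm{\mu}_{r,t}^{t+1}$ produced by step~[S5], observing that it is obtained from $\nu_{t-1}=\bm{\mu}_{r,t-1}^{t}$ by exactly one projected gradient-ascent step on the reduced dual function $h_r^t$, whose unique maximizer is $\bm{\mu}_r^{t*}$. The target in the claim, $\bm{\mu}_r^{t+1*}$, differs from this contraction center by the horizon shift, so the first move is to insert $\pm\bm{\mu}_r^{t*}$ and treat the ``algorithmic'' contraction toward $\bm{\mu}_r^{t*}$ and the ``tracking'' drift $\bm{\mu}_r^{t*}\!\to\!\bm{\mu}_r^{t+1*}$ separately.

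For the algorithmic part I would reproduce the chain of inequalities in the proof of Theorem~\ref{the:converge} verbatim up to the point where the squared gradient-error term appears, using non-expansiveness of the projection, the fixed-point identity $\bm{\mu}_r^{t*}=[\bm{\mu}_r^{t*}+\varepsilon\nabla h_r^t(\bm{\mu}_r^{t*})]_+$, the strong concavity of $h_r^t$ (coefficient $\sigma_h$), and the Lipschitz constant $\sigma_g^2/\sigma_c+\phi$ from Lemma~\ref{lem:gradient}. The one genuinely new element is the second moment of the employed gradient $\bm{g}^t(\bm{y}^t)-\phi\nu_{t-1}$: I would decompose its deviation from the exact dual gradient into a zero-mean randomization component $\bm{g}^t(\bm{z}^t)-\bm{g}^t(\bm{z}^{t*})$, whose conditional variance is bounded by $\Delta$ through \eqref{eq:Delta}, and a linearization component $\bm{g}^t(\bm{y}^t)-\bm{g}^t(\hat{\bm{y}}^t)$, whose squared norm is bounded by $\rho$ through Assumption~\ref{ass:rho}. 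This yields a one-step inequality $E[\|\nu_t-\bm{\mu}_r^{t*}\|^2]\le \gamma\,E[\|\nu_{t-1}-\bm{\mu}_r^{t*}\|^2]+\varepsilon^2\Delta+\varepsilon^2\rho$, with $\gamma:=1-2\varepsilon\sigma_h+\varepsilon^2(\sigma_g^2/\sigma_c+\phi)^2$, which by Lemma~\ref{lem:sigmarelation} and the stepsize rule \eqref{eq:stepconverge} lies in $[0,1)$.

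The tracking part uses the drift bound $\|\bm{\mu}_r^{t+1*}-\bm{\mu}_r^{t*}\|^2\le e$ to convert the distance to $\bm{\mu}_r^{t*}$ into the distance to $\bm{\mu}_r^{t+1*}$. Crucially, the previous iterate's residual appearing on the right is exactly $E[\|\nu_{t-1}-\bm{\mu}_r^{t*}\|^2]$, which the definition of the tracked quantity identifies with the one-step-earlier error $E[\|\bm{\mu}_{r,t-1}^{t}-\bm{\mu}_r^{t*}\|^2]$. Folding the drift into the additive constant produces a stationary recursion $a_t\le\gamma a_{t-1}+(\varepsilon^2\Delta+\varepsilon^2\rho+e)$ for $a_t:=E[\|\nu_t-\bm{\mu}_r^{t+1*}\|^2]$. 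Summing this geometric recursion exactly as in the final display of the proof of Theorem~\ref{the:converge} and letting $t\to\infty$ makes the transient $\gamma^t$-term vanish and leaves the asserted limit $(\varepsilon^2\Delta+\varepsilon^2\rho+e)/(2\varepsilon\sigma_h-\varepsilon^2(\sigma_g^2/\sigma_c+\phi)^2)$.

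I expect the main obstacle to be the bookkeeping that forces the additive constant to appear as $\varepsilon^2\Delta+\varepsilon^2\rho+e$ with unit coefficients and with the \emph{same} contraction factor $\gamma$ as in the offline case. Two cross-term interactions must be controlled: the inner product between the randomization noise and the linearization error inside the gradient second moment, and the inner product between the residual $\nu_t-\bm{\mu}_r^{t*}$ and the drift $\bm{\mu}_r^{t*}-\bm{\mu}_r^{t+1*}$. Bounding these by Cauchy--Schwarz or Young's inequality generically introduces extra multiplicative factors, so the delicate step is to choose the splitting so that they are absorbed without degrading $\gamma$ or inflating the constants. A secondary care point is to keep the window-shift relabeling consistent, i.e.\ to verify that the single step on the time-$t$ problem is genuinely a contraction toward $\bm{\mu}_r^{t*}$ before the horizon advance relabels it for slot $t+1$.
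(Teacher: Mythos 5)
Your proposal follows essentially the same route as the paper's own proof: one projected dual step per slot, insertion of $\pm\bm{\mu}_r^{t*}$ to separate the contraction toward the time-$t$ optimizer from the drift $\|\bm{\mu}_r^{t*}-\bm{\mu}_r^{t+1*}\|^2\leq e$, a gradient-error split into the zero-mean randomization part (conditional variance bounded by $\Delta$) and the measured-versus-linearized part (bounded by $\rho$ via Assumption~\ref{ass:rho}), and the same geometric recursion with factor $1-2\varepsilon\sigma_h+\varepsilon^2(\sigma_g^2/\sigma_c+\phi)^2$ summed and sent to the limit. The cross terms you flag as the main obstacle are precisely the ones the paper's second displayed inequality silently discards (it bounds the squared norm of a three-term sum by the sum of the three squared norms, with no Young-type inflation of the constants), so your write-up is, if anything, more candid about that bookkeeping than the paper's own argument.
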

\begin{proof}
Denote by $\bm{\mu}_{t,r}^{t+1}$ the dual produced by  Algorithm~\ref{alg:ondistributed} at time $t$ and $\bm{\mu}_r^{t+1*}$ the optimal dual solution. We have
\begin{eqnarray}
&&\hspace{-3mm} E\big[\| \bm{\mu}_{t,r}^{t+1}-\bm{\mu}_r^{t+1*} \|^2\big] \nonumber\\
&\hspace{-3mm}\leq&\hspace{-3mm}E\big[\| \bm{\mu}_{r,t-1}^{t}+\varepsilon \bm{g}^{t}(\hat{\bm{y}}_{t-1}^t)+\varepsilon \bm{g}^{t}({\bm{y}}_{t-1}^t)-\varepsilon \bm{g}^{t}(\hat{\bm{y}}_{t-1}^t) \nonumber\\
&\hspace{-3mm}&\hspace{0mm}{-\phi\bm{\mu}_{r,t-1}^{t}}-\bm{\mu}^{t*}_r+\bm{\mu}^{t*}_r-\bm{\mu}_r^{t+1*}\|^2\big]\label{eq:the3step1}\\
&\hspace{-3mm}\leq &\hspace{-3mm}E\big[\| \bm{\mu}_{r,t-1}^{t}\!+\!\varepsilon \bm{g}^{t}(\hat{\bm{y}}_{t-1}^t){-\phi\bm{\mu}_{r,t-1}^{t}}-\bm{\mu}_r^{t*}\|^2\!+\!\|\bm{\mu}_r^{t*}\!-\!\bm{\mu}_r^{t+1*}\|^2 \nonumber\\
&\hspace{-3mm}&\hspace{0mm}+\|\varepsilon \bm{g}^{t}({\bm{y}}_{t-1}^t(\bm{z}_{t-1}^t))-\varepsilon \bm{g}^{t}(\hat{\bm{y}}_{t-1}^t(\bm{z}_{t-1}^t))\|^2\big]\nonumber\\
&\hspace{-3mm}\leq &\hspace{-3mm}E\big[\| \bm{\mu}_{r,t-1}^{t}+\varepsilon \bm{g}^{t}(\bm{z}_{t-1}^t){-\phi\bm{\mu}_{r,t-1}^{t}}-\bm{\mu}_r^{t*}-\varepsilon \bm{g}^{t}(\bm{z}^*)\nonumber\\
&\hspace{-3mm} &{+\phi\bm{\mu}_{r}^{t*}} \|^2\big]+e+\varepsilon^2\rho, \nonumber
\end{eqnarray}
where the last inequality is due to (\ref{eq:error})--(\ref{eq:rho}).
{Similar to the techniques in the proof of Theorem~\ref{the:converge},} by using Lemma~\ref{lem:gradient} and the strong concavity of the dual function (which still hold for the reduced dual function), and recalling that $\Delta$ is the upper bound on the variance $Var(\bm{g}^t(\bm{z}^t))$, we get: 
\begin{eqnarray}
&\hspace{-3mm}& \hspace{-1mm} E\big[\| \bm{\mu}_{r,t}^{t+1}-\bm{\mu}_r^{t+1*} \|^2\big]\nonumber\\
&\hspace{-3mm}\leq &\hspace{-1mm}(1+\varepsilon^2{(\sigma^2_g/\sigma_c+\phi)^2}-2\varepsilon\sigma_h)E\big[\|\bm{\mu}^t_{r,t-1}-\bm{\mu}_r^{t*}\|^2\big]\nonumber\\
&&\hspace{3mm}+\varepsilon^2\Delta+\varepsilon^2\rho+e\label{eq:the3step2}\\
&\hspace{-3mm}\leq &\hspace{-1mm}(1+\varepsilon^2{(\sigma^2_g/\sigma_c+\phi)^2}-2\varepsilon\sigma_h)^tE\big[\|\bm{\mu}^{1}_{r,0}-\bm{\mu}_r^{1*}\|^2\big]\nonumber\\
&&\hspace{3mm}+(\varepsilon^2\Delta+\varepsilon^2\rho+e)\frac{1+(1+\varepsilon^2{(\sigma^2_g/\sigma_c+\phi)^2}-2\varepsilon\sigma_h)^t}{2\varepsilon\sigma_h-\varepsilon^2{(\sigma^2_g/\sigma_c+\phi)^2}},\nonumber
\end{eqnarray}
{where the last inequality is obtained by recursively applying the steps from \eqref{eq:the3step1}--\eqref{eq:the3step2}.}
%

Selecting the stepsize $\varepsilon$ in a way to satisfy \eqref{eq:stepconverge}, and letting $t\rightarrow\infty$, the main result \eqref{eq:onlineconverge} follows.
\end{proof}

The bound~\eqref{eq:onlineconverge} provides a characterization of the discrepancy between the optimal dual variable and the dual variable generated by the online algorithm. The asymptotic bound depends on the underlying dynamics of the optimization problem through $e$ and on the measurement errors and linearization errors through $\rho$. The result~\eqref{eq:onlineconverge}  can also be interpreted as input-to-state stability, where the  trajectory  of the optimal dual variables is taken as a reference.

The result \eqref{eq:onlineconverge} also suggests ways to improve the performance of Algorithm~\ref{alg:ondistributed} in terms of tracking. For example, apply reasonably small stepsize $\epsilon$; more frequent updates leads to smaller $e$; finer control granularity for discrete devices leads to a smaller $\Delta$.


%


\section{Numerical Examples}\label{sec:simulation}


\subsection{Simulation Setup}
%
{ We consider the IEEE 37-node test feeder shown in Fig.~\ref{F_feeder}. We modify the network by considering the phase ``c'' of the original system and replacing the loads specified in the original dataset with the real load data measured from feeders in Anatolia, California during a week of August 2012~\cite{Bank13}. 
The data have a granularity of 1 second, and represent the loading of secondary transformers.
Line impedances, shunt admittances as well as active and reactive loads are adopted from the respective data set. We install one PV systems at each of the following 18 nodes: $4$, $7$, $10$, $13$, $17$, $20$, $22$, $23$, $26$, $28$, $29$, $30$, $31$, $32$, $33$, $34$, $35$ and $36$, with their generation profiles simulated based on the real solar irradiance data available in~\cite{Bank13}. 
The ratings of these inverters are $100$ kVA for $i = 3$, $350$ kVA for $i = 33, 34$,  $300$ kVA for $i = 35, 36$, and $200$ kVA for the remaining inverters. For illustrative purpose, we show in Fig.~\ref{F_VPP_paper_load} the loads realization as well as the available real power from a PV system, where large and fast fluctuation can be observed.}
We then install 15 A/Cs and 15 batteries at each of the following 25 nodes: 2, 5, 6, 7, 9, 10, 11, 13, 14, 16, 18, 19, 20, 21, 22, 24, 26, 27, 28, 29, 30, 32, 33, 35 and 36, totaling 375 A/Cs and 375 batteries.  The detailed simulation modeling of PV inverters, A/Cs, and batteries are described as follows.

\emph{PV inverters}: The PV inverters' objective functions are set uniformly to 
\begin{eqnarray}C_{i,d}^t(p_{i,d}^t,q_{i,d}^t) =   c_p (p^{\textrm{av},t}_{i,d} - p^t_{i,d})^2+c_q q_{i,d}^{t2},\label{eq:PVmodel}
\end{eqnarray} 
with positive constant $c_p$ and $c_q$, in an effort to minimize the amount of real power curtailed from the available power $p^{\textrm{av},t}_{i,d}$ based on irradiance conditions at time $t$, and the amount of reactive power injected or absorbed. PV inverters are set to be updated every 1 second within a convex feasible set as (\ref{eq:pv}).

\emph{A/Cs}: We set a uniform cost function for all A/Cs as 
\begin{eqnarray}
C_{i,d}(\bm{T}_{i,d}^{t})=c_t\|\bm{T}_{i,d}^{t}-\bm{1}\cdot T_{i,d}^{\text{nom}}\|^2, \label{eq:ACmodel}
\end{eqnarray}
where $c_t$ is positive constant, $T_{i,d}^{\text{nom}}$ is a preferred room temperture set at  $75^{\circ}$F, and the future room temperature and constraints are modeled according to (\ref{eq:tp1})--(\ref{eq:tp3})
with $\zeta_1$ uniformly set to $0.1$, and $\zeta_2$ randomly picked from $[-0.0009,-0.0011]$.
The acceptable ranges of room temperatures are set to $[70^{\circ}\text{F},80^{\circ}\text{F}]$. Each A/C updates every 15 minutes with two possible power status: 4~kW (on) and 0 (off).

\emph{Batteries}: We set a uniform cost function for batteries as 
\begin{eqnarray}
C_{i,d}(\bm{S}_{i,d}^{t})=c_b\|\bm{S}_{i,d}^{t}-\bm{1}\cdot S_{i,d}^{\text{nom}}\|^2,\label{eq:EVmodel}
\end{eqnarray}
where $c_b$ is a positive constant, $S_{i,d}^{\text{nom}}$ is a preferred battery state of charge set to $0.5$ for all batteries for illustrative purpose, and the battery dynamics and constraints are modeled as (\ref{eq:e1})--(\ref{eq:e2}), 
with the { (normalized)} SOC bounds set to $[0.2,0.8]$ uniformly for simplicity.\footnote{Here, this battery model is simplified for better illustrating battery's response towards signals. In practice, we can apply objective function to avoid frequent switch between charging and discharging, deep charge/discharge that can damage battery, etc., and the constraints can involve charging deadline, e.g., charge the EV battery with at least 90\% SOC by 8 am.} For each battery, we set {a uniform capacity of 20~kWh corresponding to a (normalized) SOC of 1}, a uniform charging rate of 4~kW, and a fixed discharging rate randomly picked between -3.6~kW and -4.4~kW. { Therefore, charging a battery for 15~min increases its SOC by 0.05 while discharging it for 15~min decreases its SOC by some value between 0.045 and 0.055 depending on the randomly selected discharging rate.} {Each} battery updates power status of charging/off/discharging every 15 minutes.
\begin{figure}
\centering
\includegraphics[width=.43\textwidth]{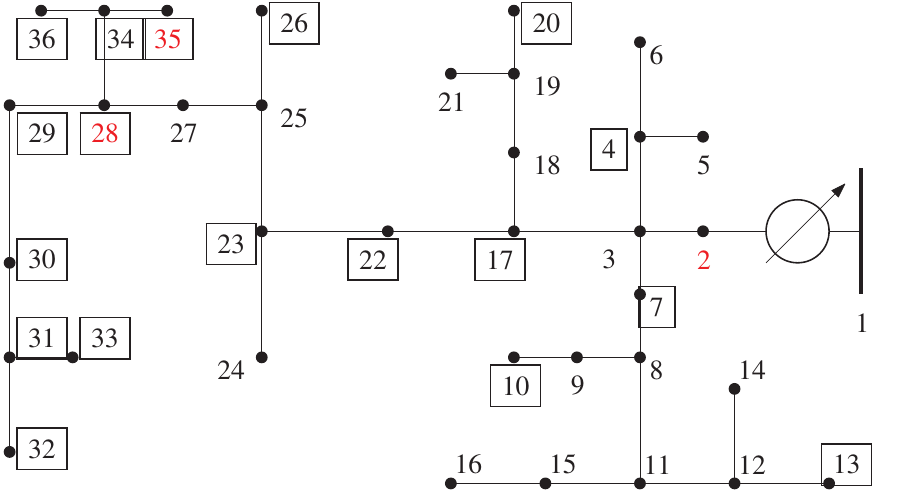}
\caption{Modified IEEE 37-node feeder. The boxes represent PV systems.}
\label{F_feeder}
\end{figure}
\begin{figure}
\centering
\includegraphics[trim = 12mm 0mm 0mm 0mm, clip, scale=0.37]{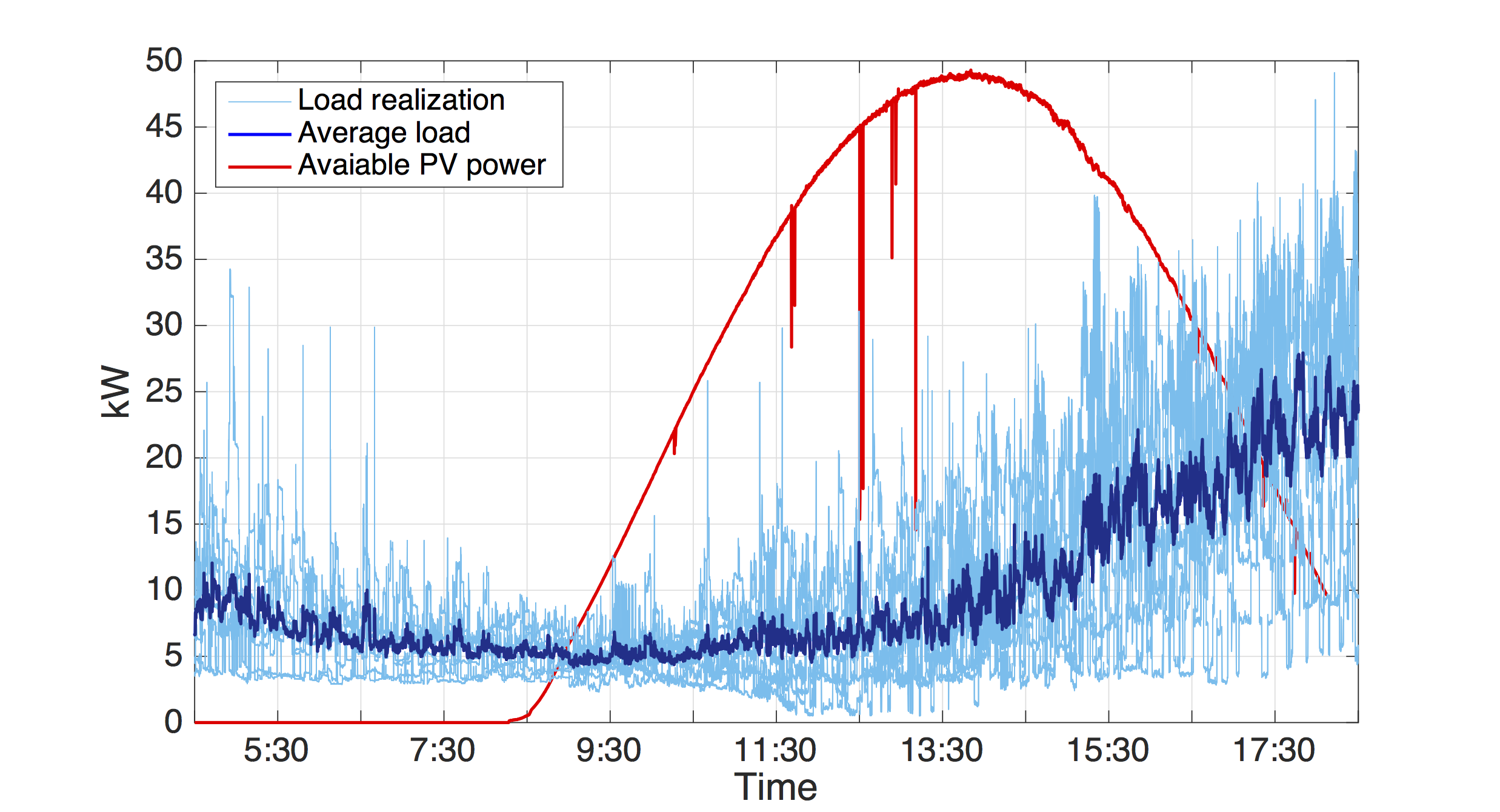}
\caption{Profiles of loads and power available from the PV systems. The average load profile is marked in blue.}
\label{F_VPP_paper_load}
\end{figure}

\emph{Network Operator}: The operational constraints $g^t(\hat{y}^t)\leq 0$ are set to voltage regulation:
\begin{eqnarray}
\underline{v}_i \leq \hat{v}_i^t \leq \overline{v}_i,\label{eq:voltregu}
\end{eqnarray} 
{with voltage upper and lower bounds $\overline{v}_i$ and $\underline{v}_i$  set to $1.05$~p.u. and $0.95$~p.u. respectively for all $i\in\hN$. We further enforce robust design by implementing tighter voltage bounds of $\overline{v}'_i=1.04$~p.u. and $\underline{v}'_i=0.96$~p.u.; see Appendix~\ref{app:robustvolt} for detailed discussion for robust voltage regulation design. Network operator collects voltage magnitudes from all nodes, updates dual variables with a constant stepsize, 
and calculates incentive signals every second.}  { Notice that we have used very small regularization parameter $\phi>0$ as well as $\phi=0$, and there is no distinguishable difference between the two.  All the results reported below are for the case with  $\phi=0$.}

{The time window is one hour ahead. At any time $t$, we have $W^{w}_t=\{t, t+15~\text{minutes}, t+30~\text{minutes}, t+45~\text{minutes}\}$. The randomization step is performed based on \eqref{eq:probability}, 
since the feasible operational points of both A/Cs and batteries are in one dimension.}

For the rest of this section, we will focus on illustrating performance of online asynchronous algorithm, i.e., Algorithm~\ref{alg:ondistributed}. We refer to numerical results in \cite{zhou2017stochastic} for numerical examples of stability analysis for offline algorithm. 

\subsection{Online Asynchronous Distributed Algorithm}

We implement Algorithm~\ref{alg:ondistributed} to coordinate network operator and customers to achieve operational and economic goals,\footnote{{ While there are existing solvers for convex optimization problems such as YALMIP, CPLEX, and GAMS  \cite{cplex2009v12, lofberg2004yalmip, sahinidis2004baron}, in simulations we implement the standard primal-dual gradient algorithm to  solve  problem $\bm{\mathcal{P}}_{1,i}^t$ in step [S2] of Algorithm~\ref{alg:ondistributed}. }} and examine individual and aggregate behaviors of the controllable devices. The asynchrony is simplified as follows: At the beginning of each minute, one-{fifteenth (i.e., 25/375)} of the A/Cs and batteries update their setpoints while the rest maintain the previous decisions. {Therefore, every 15 minutes, all these devices complete one round of updates.} In practice, more asynchronous updates are expected, which usually leads to better and smoother results. 
{ The simulation is for a period of time from~4:30 am to 6:15~pm (49500 seconds), and conducted on a laptop with Intel Core i7-7600U CPU @ 2.80GHz 2.90 GHz, 8.00GB RAM, running MatLab R2018a on Windows 10 Enterprise Version. The update frequency for the continuous devices is 1 iteration per second. The total computing time and the average per-iteration computing time are 1150 seconds ($<20$ minutes) and about 0.02 second, respectively.}
\begin{figure}
\centering
\includegraphics[trim = 0mm 0mm 0mm 0mm, clip, scale=0.32]{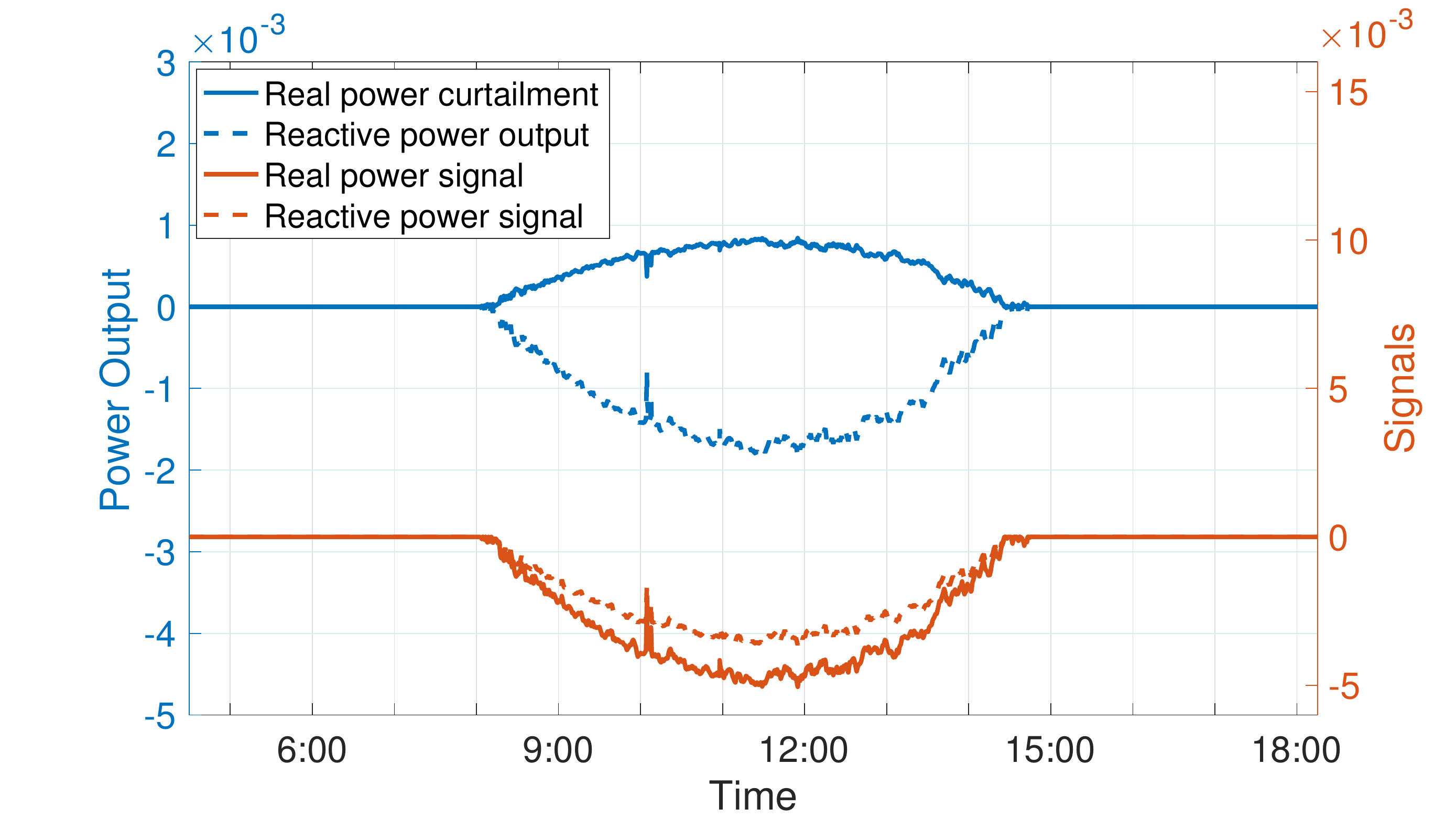}
\caption{Power output of one arbitrary PV inverter.}\label{fig:1pv}
\vspace{5mm}
\includegraphics[trim = 0mm 0mm 0mm 0mm, clip, scale=0.32]{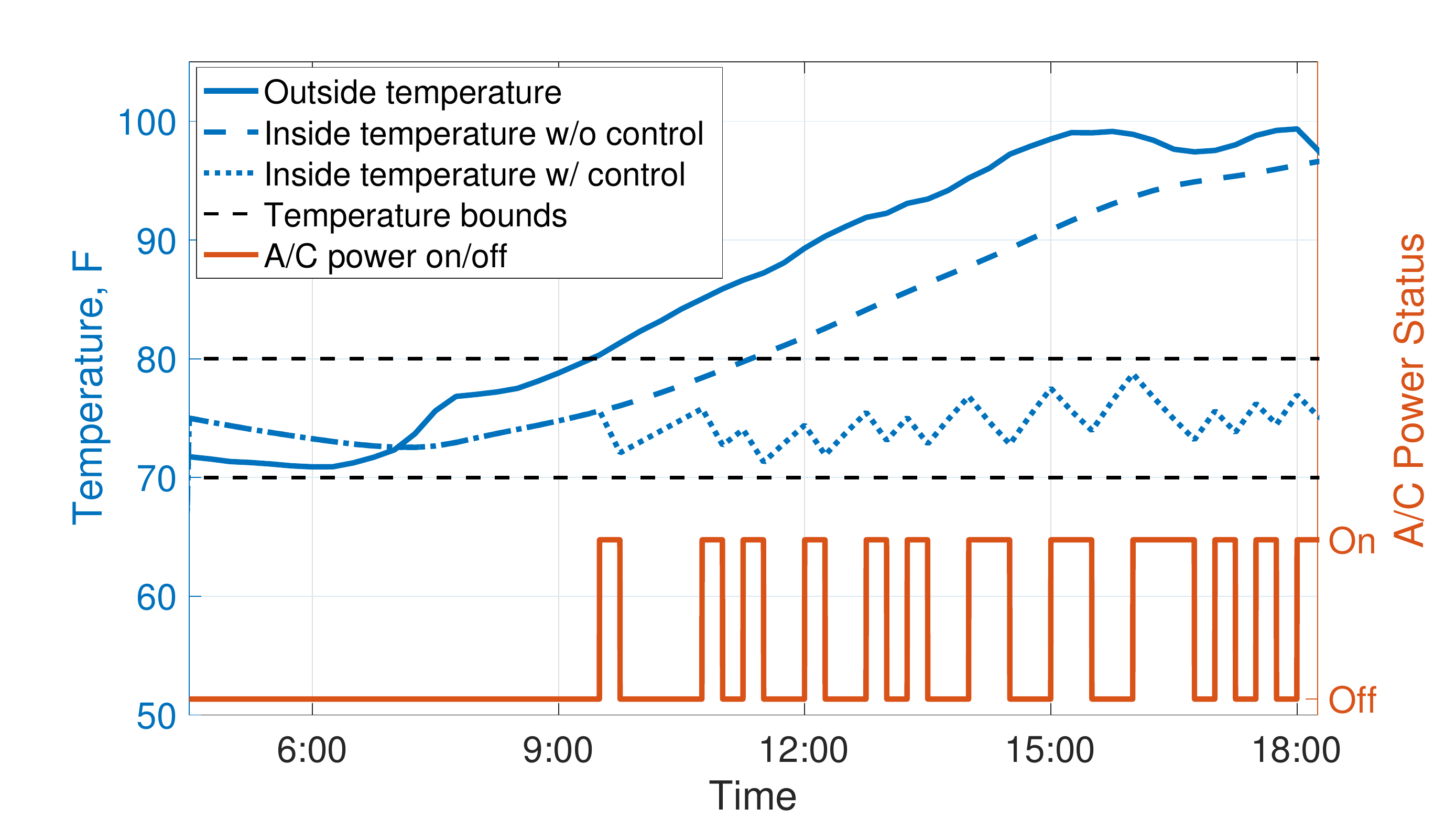}
\caption{Temperature and power status of one arbitrary A/C.}\label{fig:1ac}
\vspace{5mm}
\includegraphics[trim = 0mm 0mm 0mm 0mm, clip, scale=0.425]{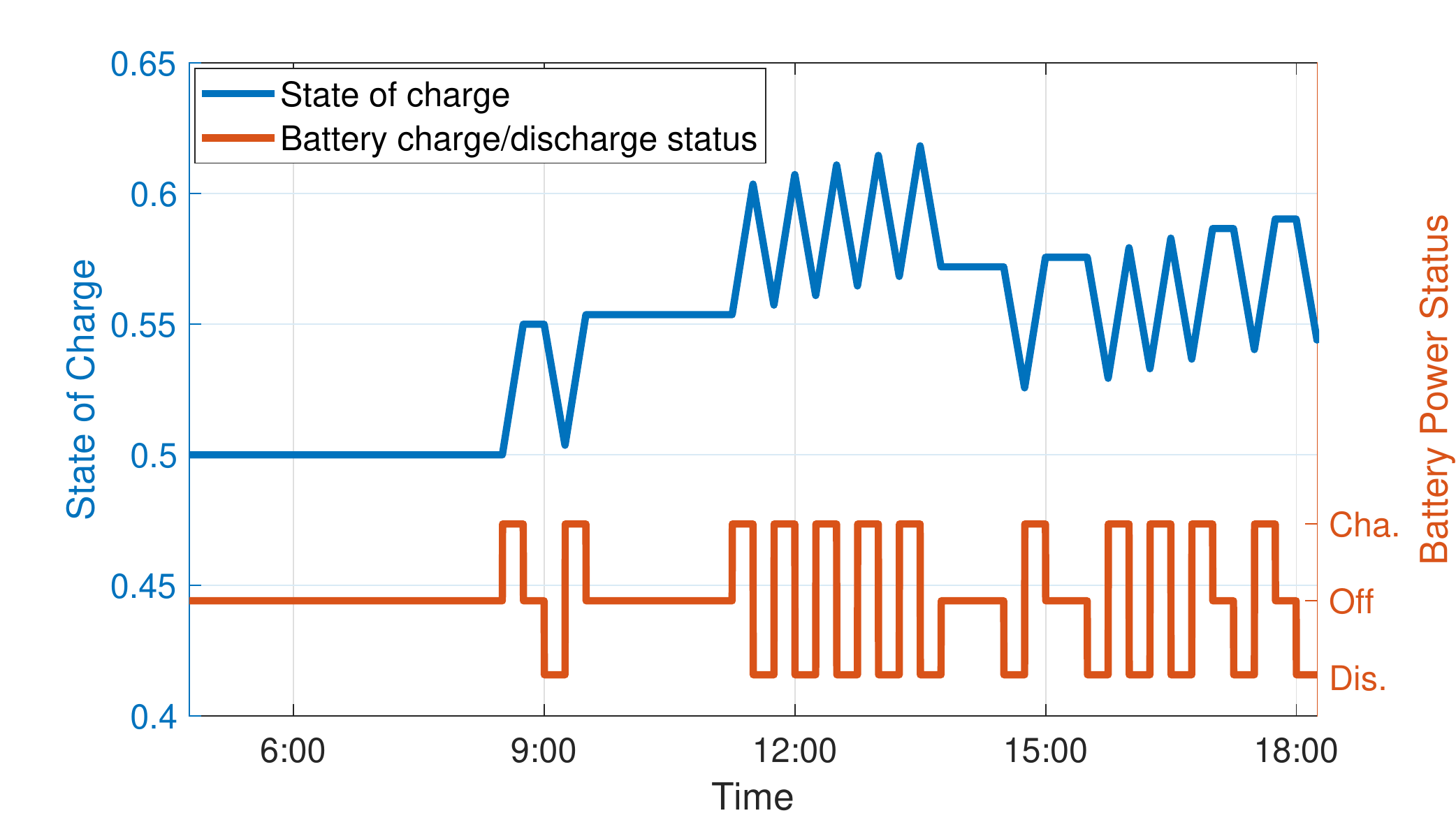}
\caption{SOC and power status of one arbitrary battery.}\label{fig:1soc}
\end{figure}

\subsubsection{Dynamics of Individual Device}
We first zoom in to examine individual controllable devices.

\emph{PV inverter:} We select an arbitrary PV inverter and plot its power output in Fig.~\ref{fig:1pv}. Positive real power curtailment and negative reactive power injection can be observed in response to the signal that incentivizes negative power injection.

\emph{A/C:} We select an arbitrary house from an arbitrary node and plot its inside temperature. As shown in Fig.~\ref{fig:1ac}, the room temperature is controlled within the acceptable range. 

\emph{Battery:} We select an arbitrary battery and plot its SOC along with its power setpoint in Fig.~\ref{fig:1soc}.  {The selected battery has a discharging rate of -3.85~kW. Here, battery power status ``Cha.", ``Off", and ``Dis." displayed on the right y-axis means charging, off, and discharging with 4~kW, 0~kW and -3.85~kW, respectively.} We observe an SOC near 0.5 but deviating due to the response to incentive signal, which we will further illustrate later.

\subsubsection{Aggregated Behavior}
We zoom out to examine the aggregated behavior of hundreds of discrete devices.

As shown in Fig.~\ref{fig:375ac}, the temperatures of all 375 houses with A/Cs are controlled within the acceptable temperature range, with an average temperature around the preferred value of $75^{\circ}$F. Tighter temperature bounds can be achieved by finer control granularity and more frequent control.  { It is also expected that larger window size may provide better temperature control by considering future temperatures of a longer time horizon, as shown in Fig.~\ref{fig:375ac_w}. }

\begin{figure}
\centering
\includegraphics[trim = 0mm 0mm 0mm 0mm, clip, scale=0.44]{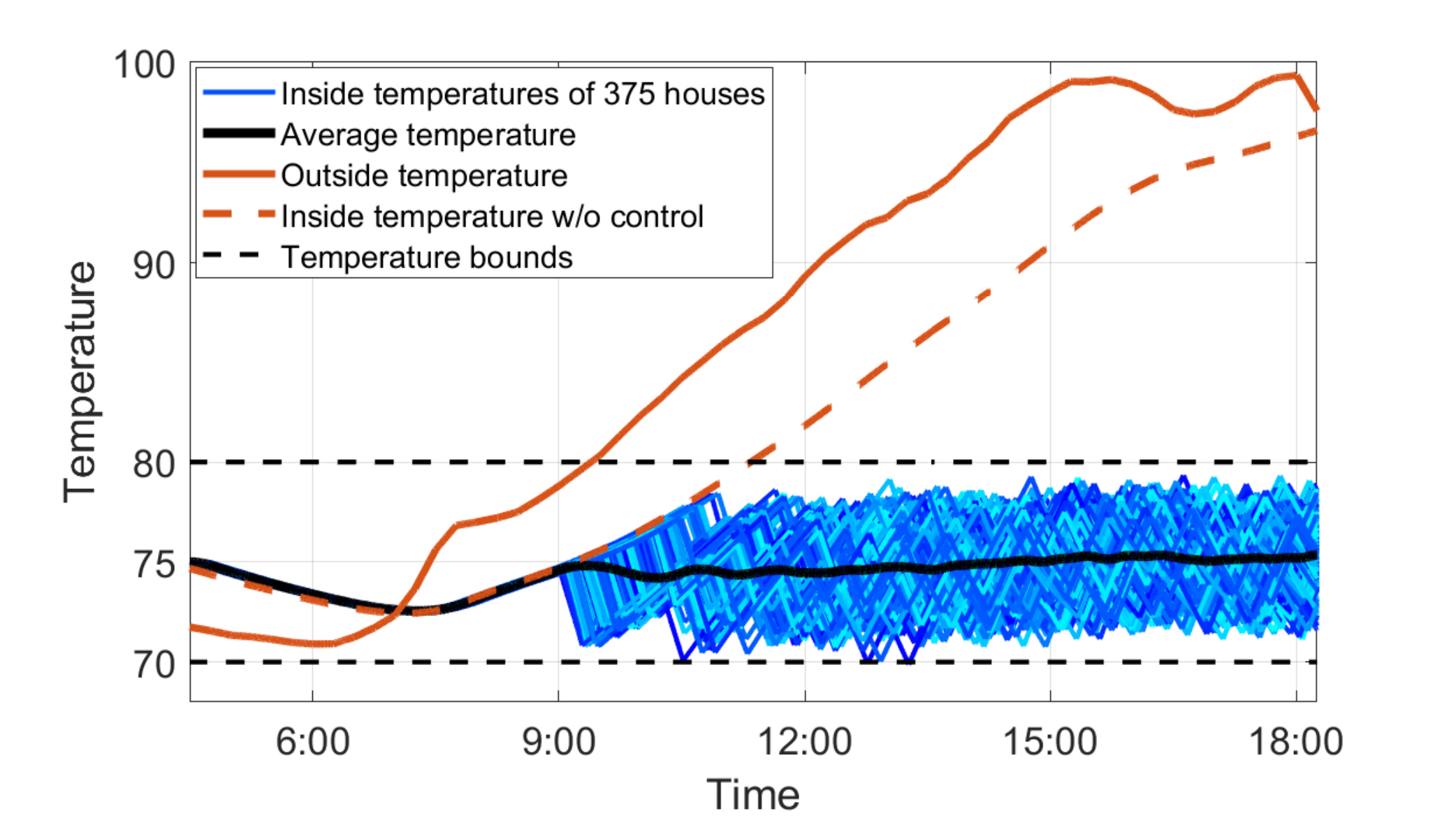}
\caption{Temperatures of 375 households under control by A/Cs.}\label{fig:375ac}
\vspace{5mm}
\includegraphics[trim = 5mm 0mm 0mm 0mm, clip, scale=0.33]{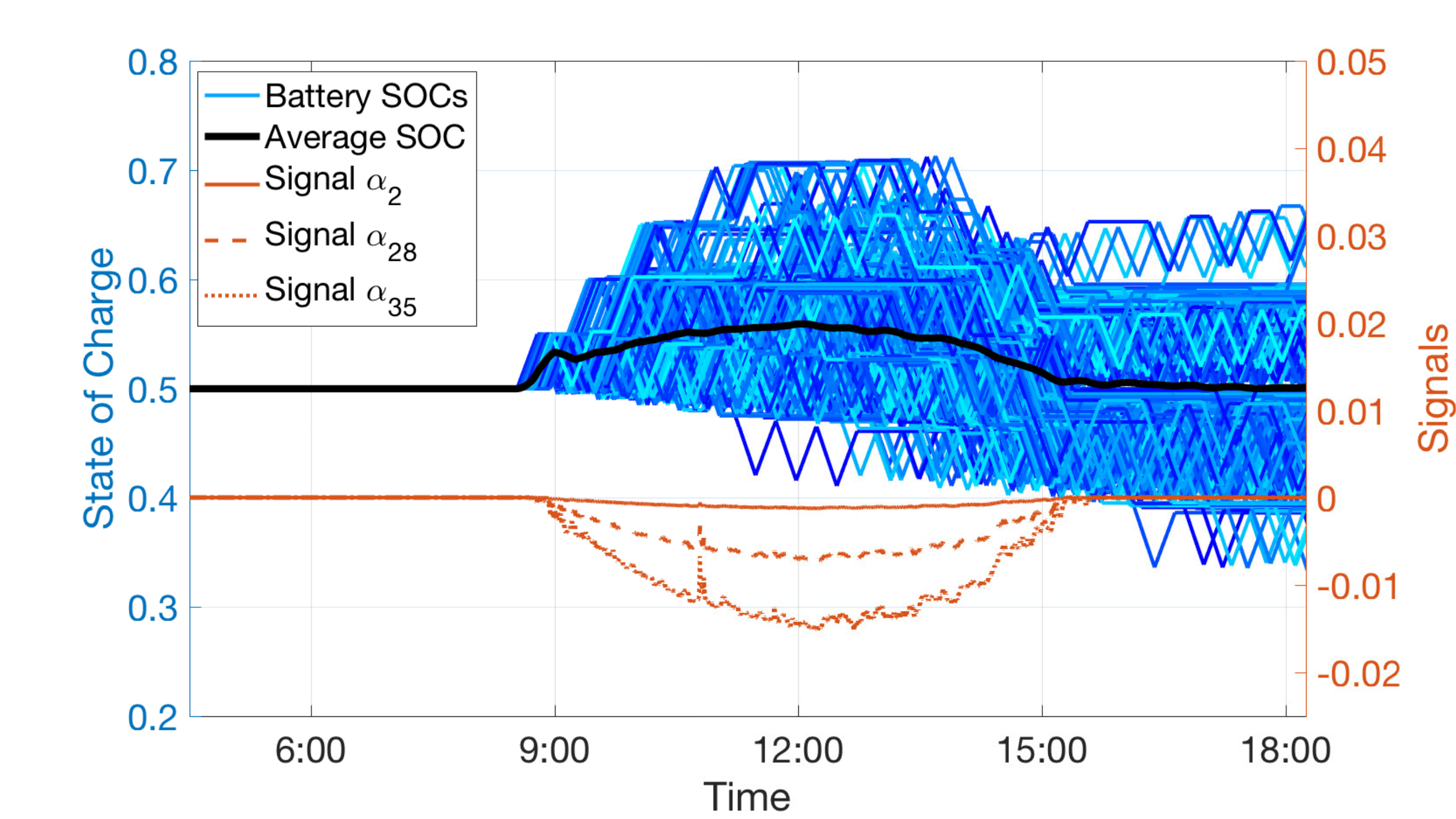}
\caption{SOCs of 375 batteries under control and signals.}\label{fig:375b}
\vspace{5mm}
\includegraphics[trim = 0mm 0mm 0mm 0mm, clip, scale=0.33]{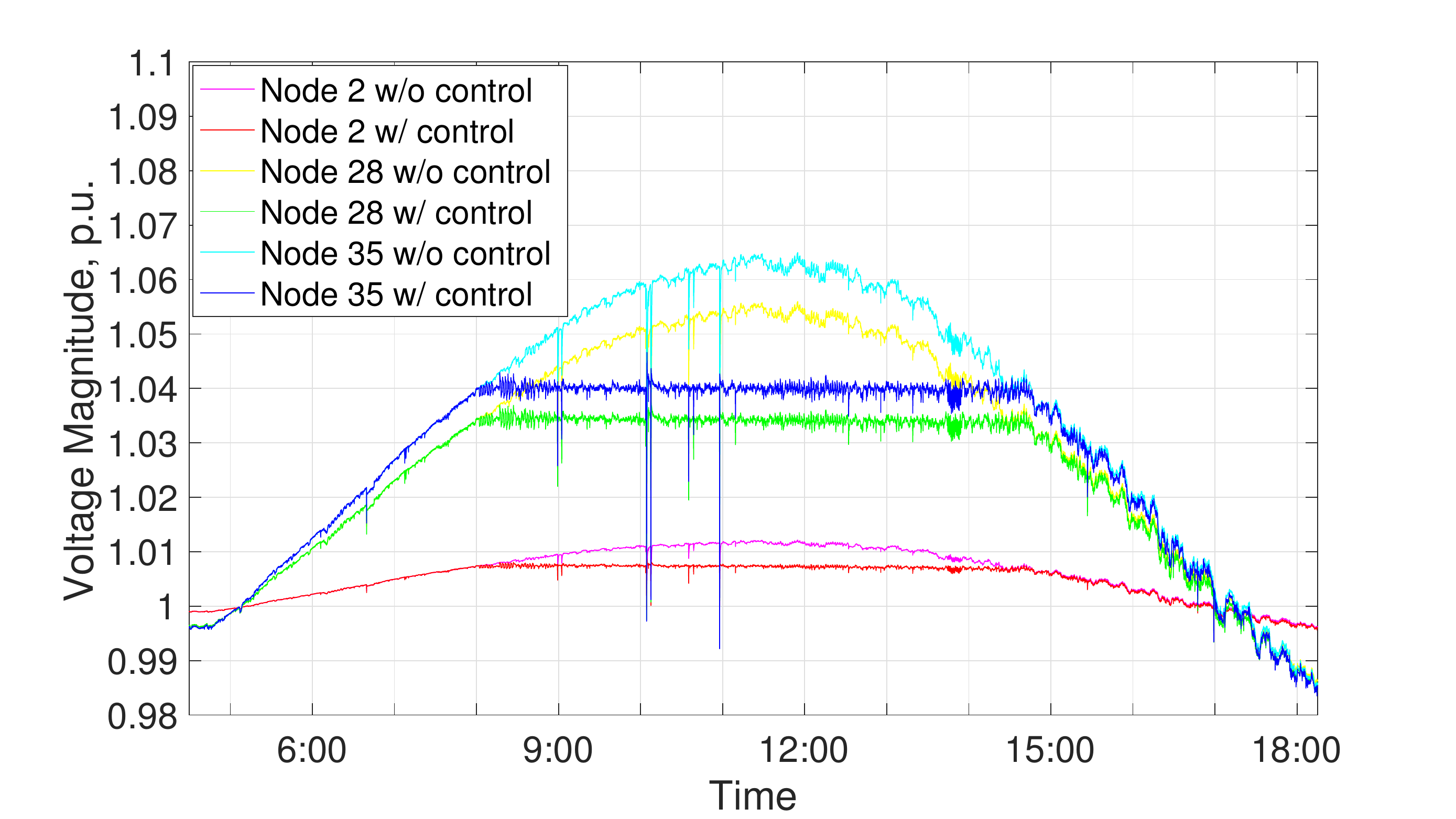}
\caption{Uncontrolled and controlled voltages.}\label{fig:controlv}
\end{figure}

Fig.~\ref{fig:375b} shows the SOCs of all 375 batteries, together with exemplifying incentive signals for real power at 3 of nodes (see Eq.~\eqref{eq:signal}). One can observe that batteries are incentivized to charge more during the middle of the day with average SOC higher than the preferred one.

\begin{figure}
\centering
\includegraphics[trim = 23mm 0mm 0mm 0mm, clip, scale=0.365]{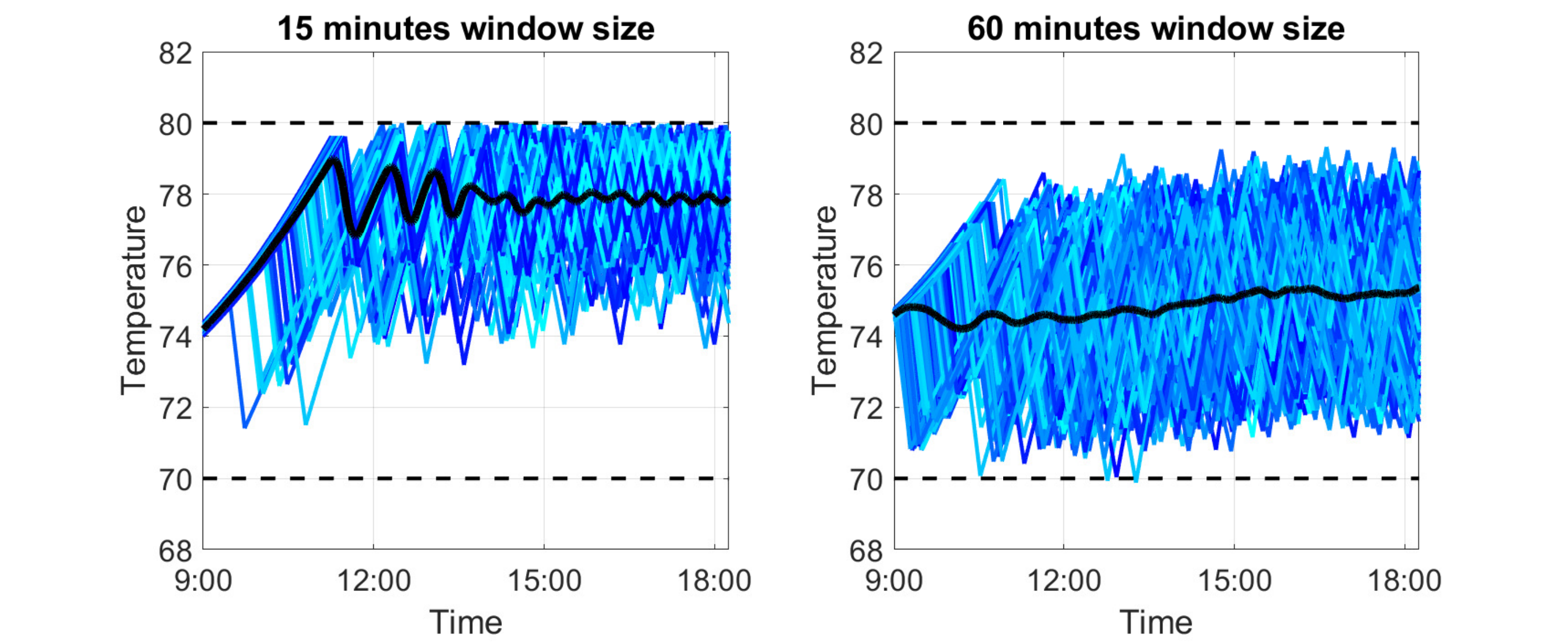}
\caption{Temperatures of 375 households under different window sizes: 15 minutes window size (left) and 60 minutes window size (right).}\label{fig:375ac_w}
\end{figure}

\subsubsection{Voltage Regulation}
Lastly, we plot the voltage in Fig.~\ref{fig:controlv}. Due to coordinated efforts of all controllable devices, the voltage magnitude is brought within the {original 1.05 p.u. upper bound}, compared with uncontrolled one. {Meanwhile, we have observed voltage above the enforced 1.04 p.u. upper bounds. This is mainly due to two reasons: i) the randomly realized power injections cause random fluctuation in voltages, and ii) the online algorithm may not enforce feasibility at each update. Nevertheless, the robust design guarantees that the original constraints are satisfied by large margin.}

\section{Conclusion}\label{sec:conclusion}
We have proposed a distributed stochastic dual algorithm for managing DERs with both continuous and discrete decision variables as well as device dynamics, and extended it to the practical realtime setting with time-varying operating conditions, asynchronous updates by devices, and feedback being leveraged to account for nonlinear power flows as well as reduce communication overhead. The resulting algorithm provides a general online stochastic optimization algorithm for coordinating networked DERs to meet operational and economic objectives and constraints.  We characterize the convergence of the algorithm analytically and evaluate its performance numerically.

As further work, we will exploit the tree structure of the distribution system and explore efficient data structure for practical implementation of the proposed algorithm, with the goal of achieving seconds or sub-second computation time to meet the real-time control requirement for large distribution systems with up to one hundred thousands nodes/devices. We will also investigate further robust design with statistical performance guarantee, as well as apply the proposed stochastic optimization approach to the design and control of other networked systems such as those in communication networks that involve discrete decision variables.



\section*{Acknowledgement}
We thank the anonymous reviewers for their constructive suggestions to help improve this work, and Dr. Changhong Zhao for helpful discussion.

{
This work was authored in part by the National Renewable Energy Laboratory, operated by Alliance for Sustainable Energy, LLC, for the U.S. Department of Energy (DOE). Funding provided by Grid Modernization Lab Consortium (GMLC). The views expressed in the article do not necessarily represent the views of the DOE or the U.S. Government. The U.S. Government retains and the publisher, by accepting the article for publication, acknowledges that the U.S. Government retains a nonexclusive, paid-up, irrevocable, worldwide license to publish or reproduce the published form of this work, or allow others to do so, for U.S. Government purposes.
}
\bibliographystyle{IEEEtran}
\bibliography{biblio.bib}

{
\appendix

\subsection{Characterization of $\Delta$ in \eqref{eq:Delta}}
\label{app:Delta}
This part characterizes the bound $\Delta$ of $\text{Var}\big(\bm{g}^t(\bm{z}^t)\big)$ defined in \eqref{eq:Delta}. For illustrative purpose, we only consider the two-point probability distribution in \eqref{eq:probability} for all discrete devices; extensions can be made in a straightforward way for more general probability distribution.  Because the randomized step only takes place for the current time $t$, we have that $\text{Var}\big(\bm{g}^t(\bm{z}^t)\big)=\text{Var}\big(g^t(z^t)\big)$. For notational simplicity, we use $g^t(z^t)=R^tz^t+g_0^t$ with $R^t\in\mathbb{R}^{M \times N}$ and a constant vector $g_0^t$. Then the variance can be written as:
\begin{eqnarray}
&&\text{Var}(g^t(z^t))\nonumber\\
&=&E[\|g^t(z^t)-g^t(z^{t*})\|^2]-\|E[g^t(z^t)-g^t(z^{t*})]\|^2\nonumber\\
&=&E\Big[\sum_{i=1}^M\Big(\sum_{j\in\cN}R^t_{ij}(z^t_j-z_j^{t*})\Big)^2\Big]\nonumber\\[-4pt]
&\leq&\sum_{i=1}^M\Big(\sum_{j\in\cN}R^{t2}_{ij}\cdot E\big[\sum_{j\in\cN}(z^t_j-z_j^{t*})^2\big]\Big)\nonumber\\[-3pt]
&\leq&\sum_{i=1}^M\Big(\sum_{j\in\cN}R^{t2}_{ij} \cdot \sum_{j\in\cN}\sum_{d\in\cD_{S_j}} E\big[(z^t_{j,d}-z_{j,d}^{t*})^2\big]\Big)\nonumber[-3pt\\
&=& \sum_{i=1}^M\Big(\sum_{j\in\cN}R^{t2}_{ij} \cdot  \sum_{j\in\cN}\sum_{d\in\cD_{S_j}} (p_{j,d}^*-\underline{p}_{j,d})(\overline{p}_{j,d}-p_{j,d}^*)\Big)\nonumber
\end{eqnarray}
\begin{eqnarray}
&\leq& \sum_{i=1}^M\Big( \sum_{j\in\cN}R^{t2}_{ij} \cdot \sum_{j\in\cN}\sum_{d\in\cD_{S_j}}{(\overline{p}_{j,d}-\underline{p}_{j,d})^2}/{4}\Big)\nonumber\\[-4pt]
&\leq& M \cdot |\cD_{S}|\cdot \|R^t\|_F^2\cdot \underset{j,d}{\max} (\overline{p}_{j,d}-\underline{p}_{j,d})^2/4,\label{eq:deltavalue}
\end{eqnarray}
where we apply Cauchy-Schwarz inequality in the first inequality, Jensen's inequality in the second, and the second equality is based on the probability distribution (\ref{eq:probability}) with real power only.

Indeed, \eqref{eq:deltavalue} can serve as a  bound for $\Delta$, which is increasing with the number of discrete devices $|\cD_{S}|$ and the control ``granularity'' of the device $ \overline{p}_{j,d}-\underline{p}_{j,d}$.


\subsection{Robust Design for Voltage Regulation}\label{app:robustvolt}

With our proposed stochastic algorithm, the operational constraints such as voltage-regulation constraints may possibly be violated. {Using} the voltage-regulation as an illustrative example, we now discuss the problem of robust design that ensures that the probability of voltage violation is below certain specified value. 

Specifically, we consider the voltage constraint $\underline{v}_i\leq v_i\leq \overline{v}_i$, and study the design of tighter bounds $\underline{v}'_i,~ \overline{v}'_i$ with $\underline{v}_i< \underline{v}'_i\leq v_i \leq \overline{v}'_i< \overline{v}_i$ such that the voltages are within $[\underline{v}_i,~ \overline{v}_i]$  with large enough probability under the proposed algorithm. 

\begin{proposition}\label{pro:rob}
	Given $\delta>0$, if the  voltage upper and lower bounds are set as:
	$\overline{v}'_i \leq \overline{v}_i-\delta$ and $\underline{v}'_i \geq \underline{v}_i+\delta$,
	then
	\begin{subequations}\label{eq:robustvoltage}
	\begin{eqnarray}
		\text{Prob}(v_i\geq \overline{v}_i)\leq{\text{Var} (v_i)}/{2\delta^2},\\ \text{Prob}(v_i\leq \underline{v}_i)\leq{\text{Var}(v_i)}/{2\delta^2}.
	\end{eqnarray}
	\end{subequations}

\end{proposition}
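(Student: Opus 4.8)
The plan is to reduce the proposition to a Chebyshev-type concentration bound centered at the expected voltage. First I would exploit the structure of the randomization step: from the two-point distribution \eqref{eq:probability} one checks directly that the recovered setpoints are unbiased, $E[z_{i,d}^{t}]=z_{i,d}^{t*}$, and since the (linearized) voltage $v_i$ is affine in $z^t$ through \eqref{eq:linearization}--\eqref{eq:aggregator}, taking expectations gives $E[v_i]=\hat{v}_i(z^{t*})=:m_i$; that is, the expected voltage coincides with the voltage at the relaxed solution. Because the constraints are imposed on the relaxed solution using the \emph{tightened} bounds, $m_i\in[\underline{v}'_i,\overline{v}'_i]$, and the hypotheses $\overline{v}'_i\le\overline{v}_i-\delta$ and $\underline{v}'_i\ge\underline{v}_i+\delta$ then yield the two margins $\overline{v}_i-m_i\ge\delta$ and $m_i-\underline{v}_i\ge\delta$.

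Next I would pass from the absolute-level events to deviation-from-mean events. Since $\overline{v}_i\ge m_i+\delta$, the inclusion $\{v_i\ge\overline{v}_i\}\subseteq\{v_i-m_i\ge\delta\}$ holds, so $\text{Prob}(v_i\ge\overline{v}_i)\le\text{Prob}(v_i-m_i\ge\delta)$; symmetrically, $\text{Prob}(v_i\le\underline{v}_i)\le\text{Prob}(v_i-m_i\le-\delta)$. The core estimate is then Chebyshev's inequality applied to $v_i-m_i$, whose variance is exactly the randomization-induced variance $\text{Var}(v_i)$: this gives the two-sided bound $\text{Prob}(|v_i-m_i|\ge\delta)\le \text{Var}(v_i)/\delta^2$, which already establishes \eqref{eq:robustvoltage} up to a constant.

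The main obstacle is recovering the factor $1/2$, i.e., upgrading the two-sided estimate $\text{Var}(v_i)/\delta^2$ to the one-sided bound $\text{Var}(v_i)/(2\delta^2)$ for each tail separately, since a generic one-sided (Cantelli) inequality does not deliver this. I would handle it by arguing that the deviation $v_i-m_i$ is (approximately) symmetric about zero: it is an affine aggregate of the many independent, mean-zero per-device rounding errors $z_{i,d}^{t}-z_{i,d}^{t*}$, so the two tails split the two-sided probability essentially evenly and each is bounded by half of $\text{Var}(v_i)/\delta^2$. Under this symmetrization, the two displayed inequalities in \eqref{eq:robustvoltage} follow at once by combining the event inclusions of the previous step with the halved Chebyshev bound. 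I would flag this symmetry step as the only place where the argument is not tight for an arbitrary single device, and note that it is justified in the regime of interest, where $v_i$ aggregates the contributions of a large number of discrete DERs.
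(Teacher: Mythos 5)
Your proposal follows essentially the same route as the paper's proof: Chebyshev's inequality applied to $v_i-E[v_i]$, the observation that the tightened bounds $\overline{v}'_i\le\overline{v}_i-\delta$ and $\underline{v}'_i\ge\underline{v}_i+\delta$ give the event inclusions $\{v_i\ge\overline{v}_i\}\subseteq\{v_i-E[v_i]\ge\delta\}$ (and symmetrically for the lower bound), and an even split of the two-sided tail to obtain the factor $1/2$. The symmetry step you rightly flag as the only non-tight point is exactly what the paper asserts without justification---it writes $\text{Prob}(v_i-\tilde{v}_i\ge\delta)=\frac{1}{2}\,\text{Prob}(|v_i-\tilde{v}_i|\ge\delta)$ as an equality---so your treatment is, if anything, more careful than the original.
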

\begin{proof}
Let $\tilde{v}_i=E[v_i]$. By Chebyshev's inequality \cite{stark2014probability}, given $\delta>0$, we have
\begin{eqnarray}
\text{Prob}(|v_i-\tilde{v}_i|\geq \delta)\leq \frac{\text{Var}(v_i)}{\delta^2}.\nonumber
\end{eqnarray}

Consider first the upper bound. Choose a ``robust'' bound $\overline{v}'_i \leq \overline{v}_i-\delta$, and we have $\tilde{v}_i\leq \overline{v}'_i$. 
The probability of voltage violation is as follows:
\begin{eqnarray}
&&\text{Prob}(v_i\geq \overline{v}_i)=\text{Prob}(v_i-\tilde{v}_i\geq \overline{v}_i-\tilde{v}_i)\nonumber\\
&\leq&\text{Prob}(v_i-\tilde{v}_i\geq \overline{v}_i-\overline{v}'_i)\leq\text{Prob}(v_i-\tilde{v}_i\geq \delta)\nonumber\\
&=&\frac{1}{2}\cdot\text{Prob}(|v_i-\tilde{v}_i|\geq \delta)\leq \frac{\text{Var}(v_i)}{2\delta^2}.\nonumber
\end{eqnarray}

The lower bound can be handled similarly. 
This concludes the proof.
\end{proof}

By \eqref{eq:deltavalue}--\eqref{eq:robustvoltage}, we compute the robust voltage bounds for the simulated scenario in Section~\ref{sec:simulation}. We get $\underline{v}'_i=0.965~\text{p.u.},\ \overline{v}'_i=1.035~\text{p.u.}$ with  $\text{Var}(v_i)/\delta^2\leq 5\%$, given the voltage regulation bounds $\underline{v}_i=0.95~\text{p.u.},\ \overline{v}'_i=1.05~\text{p.u.}$ In other words, we expect less than 5\% chance of voltage violation if we use the tighter bounds in the proposed algorithm. 

Notice that \eqref{eq:deltavalue}--\eqref{eq:robustvoltage} provide a conservative estimate, and the actual probability of voltage violation may be much smaller. 
}
\end{document}